\theoremstyle{definition} \newtheorem{defn}{Definition}[section]
\theoremstyle{plain} \newtheorem{thm}[defn]{Theorem}
\theoremstyle{plain} \newtheorem{propn}[defn]{Proposition}
\theoremstyle{plain} \newtheorem{lemma}[defn]{Lemma}
\theoremstyle{plain} \newtheorem{cor}[defn]{Corollary}
\theoremstyle{plain} \newtheorem*{claim*}{Claim}
\theoremstyle{plain} \newtheorem{claim}{Claim}
\theoremstyle{definition}  \newtheorem*{ex}{Example}
\theoremstyle{definition}  
\theoremstyle{remark}  
\theoremstyle{remark}  
\theoremstyle{remark}  
\theoremstyle{plain} \newtheorem*{thm*}{Theorem}
\theoremstyle{plain} \newtheorem*{cor*}{Corollary}
\newcommand {\R} {\mathbb{R}}
\newcommand {\Q} {\mathbb{Q}}
\newcommand {\Rbar} {\overline{\mathbb{R}}}
\renewcommand {\epsilon}{\varepsilon}
\newcommand {\M} {\mathcal{M}}
\renewcommand {\iff} {\leftrightarrow}
\newcommand{\Rtilde} {\widetilde{\R}}
\DeclareMathOperator {\an}{an}
\begin{document}
\title{Powers are easy to avoid}

\author[Jones]{Gareth Jones}
\address{School of Mathematics,
The University of Manchester,
Oxford Rd,
Manchester,
M13 9PL,
UK}

\email{Gareth.Jones-3@manchester.ac.uk}
\urladdr{http://personalpages.manchester.ac.uk/staff/Gareth.Jones-3/index.php}

\author[Le Gal]{Olivier Le Gal}
\address{Univ. Grenoble Alpes, Univ. Savoie Mont Blanc, CNRS,
LAMA, 73000 Chamb\'ery, France}
\email{Olivier.Le-Gal@univ-smb.fr}
\urladdr{http://www.lama.univ-savoie.fr/~LeGal/}

\maketitle

\begin{abstract}Suppose that $\widetilde{\R}$ is an o-minimal expansion of the real field in which restricted power functions are definable. We show that if $\widehat{\R}$ is both a reduct (in the sense of definability) of the expansion $\widetilde{\R}^{\R}$ of $\widetilde{\R}$ by all real power functions and an expansion (again in the sense of definability) of $\widetilde{\R}$, then, provided that $\widetilde{\R}$ and $\widehat{\R}$ have the same field of exponents, they define the same sets. This can be viewed as a polynomially bounded version of an old conjecture of van den Dries and Miller.
	\end{abstract}
\section{Introduction}
This paper deals with first order definability and more precisely inter-definability in o-minimal expansions of the field of reals, in the 
spirit of Bianconi \cite{bia:expsin, bia:restrpower}, van den Dries, Macintyre and Marker \cite[section 5]{vdd:mac:mar:logexp}, or Jones, 
Kirby and Servi \cite{jks} (see also Sfouli \cite{sfo:sin}, Jones, Kirby, Le Gal and Servi \cite{jklgs}, Wilkie \cite{wil:wilconj} and \cite{ElSav} for considerations in the same vein, and Conant \cite{con:presburger} for a very similar result in a very different setting). We assume that the reader is familiar with the basics of o-minimal geometry, and otherwise refer to Coste \cite{cos:omin} or van den Dries \cite{vdd:tame}. The question we address here is the analogue of a conjecture of van den Dries and Miller \cite{vdd:mil:geocat} concerning the exponential function. Our analogue considers power functions in place of the exponential function.

We begin by discussing the original conjecture of van den Dries and Miller. For this we denote by $\mathbb R_{\text{an}}$ the expansion of 
the real field by restricted analytic functions, and by $\mathbb R_{\text{an}}^{\mathbb R}$ the expansion of $\mathbb R_{\text{an}}$ by 
all real power functions
$$x \mapsto \left\{\begin{array}{ll}x^r,& x>0\\ 0,& x\le 0,\end{array}\right.$$
for real $x$ and arbitrary $r$ in $\mathbb R$. The field of reals with real power functions was first studied by Miller in \cite{mil:power}, and the survey \cite{mil:fields} highlights the central role of those functions in o-minimality. In particular, they form a complete scale for asymptotics in polynomially bounded structures. Finally, by $\mathbb R_{\text{an},\exp}$ we  denote the expansion of $\mathbb R_{\text{an}}$ by the 
exponential function. The three structures $\mathbb R_{\text{an}}$, $\mathbb R_{\text{an}}^{\mathbb R}$ and $\mathbb R_{\text{an},\exp}$ are known to be o-minimal (for instance the first two are reducts of the third, which is o-minimal by \cite{vdd:mil:Ranexp}), and to be genuine expansions one of another, in the sense that new sets are definable. The latter can easily be seen at the level of one variable functions, as follows. The field of exponents of $\mathbb R_{\text{an}}$ is known to be $\mathbb Q$, meaning that $\mathbb R_{\text{an}}$ only defines the power functions that have rational exponents, while by construction, $\mathbb R_{\text{an}}^{\mathbb R}$ has field of exponents $\mathbb R$. Furthermore, $\mathbb R_{\text{an}}^{\mathbb R}$ is polynomially bounded (\cite{mil:power}), meaning that any function definable in  $\mathbb R_{\text{an}}^{\mathbb R}$ grows slower at infinity than a power function, whereas the exponential function is bounded by no power but is definable in $\mathbb R_{\text{an},\exp}$. In \cite[2.5(6),\,p.\,506]{vdd:mil:geocat} van den Dries and Miller conjectured that $\R_{\text{an}}^\R$  is maximal amongst polynomially bounded reducts of $\R_{\text{an},\exp}$. Let us clarify that here, and in general in this paper, reduct and expansion are meant in the sense of definability. So, $\mathcal R_1$ is a reduct of $\mathcal R_2$ and $\mathcal R_2$ is an expansion of $\mathcal R_1$ if $\mathcal R_2$ defines all sets definable in $\mathcal R_1$. Also, definability is always meant with parameters.

We give a rough interpretation of the conjecture. The structure $\mathbb R_{\text{an}}$ defines the restriction of the exponential function to any 
bounded interval, so definable sets of $\mathbb R_{\text{an},\exp}$ are obtained by mixing global subanalytic sets with the germ of the exponential at infinity. This germ produces at least two phenomena that are not of subanalytic nature: irrational powers (by $x^r=e^{r\log(x)}$) and infinitely flat functions (such as $e^{-\frac{1}{|x|}}$ at $0$). Flat functions define in return germs that are not polynomially bounded, and then the unrestricted exponential function is definable by Miller's dichotomy theorem (\cite{mil:exp}). Hence, if the conjecture were proven to be true, those irrational powers (and their mixing with global subanalytic sets) would be the only thing that the unbounded exponential adds to global subanalytic sets that does not, in return, define the unbounded exponential.
To the best of our knowledge, the conjecture remains open, despite the progress made by Soufflet \cite{sou:vddmilconj}, Kuhlmann and Kuhlmann \cite{kuh:kuh} and Randriambololona \cite{ran:vddmilconj}. We discuss their results later.

We can ask similar questions at the polynomially bounded level. Here, having a certain field as field of exponents is the analogue of being polynomially bounded, and adding all real exponents is the analogue of adding the exponential function. In this setting we give a complete answer to the analogue of the question asked by van den Dries and Miller. We introduce some notation before stating our result. Given an o-minimal structure $\Rtilde$ we write $k(\Rtilde)$ for the field of exponents of $\Rtilde$, and we write $\Rtilde^{\Lambda}$ for the expansion of $\Rtilde$ by the real power functions with exponents in $\Lambda$. We say that $\Rtilde$ defines restricted power functions if, for any $r\in\mathbb R$, the restriction to some neighborhood of $0$ of $x\mapsto (1+x)^r$ is definable in $\Rtilde$. Defining the restricted exponential function is sufficient to define restricted powers, so this condition is not rare. Following \cite{mil:fields}, if $\Rtilde$ is polynomially bounded and defines restricted power functions, $\Rtilde^\R$ is polynomially bounded again. (The question of whether $\Rtilde^\R$ is polynomially bounded assuming  {only} that $\Rtilde$ is polynomially bounded remains open.) For brevity, the following statement of our main result does not mention the assumption of being polynomially bounded. But the interest is in this case, since otherwise $\Rtilde$ and $\Rtilde^\R$ already define the same sets. 

\begin{thm}\label{main} Let $\Rtilde$ be an o-minimal expansion of the real field that defines restricted power functions, and let $\widehat{\R}$ be an expansion of $\Rtilde$ and a reduct of $\Rtilde^\R$ (all in the sense of definability). If $\widehat{\R}$ and $\widetilde{\R}$ have the same field of exponents, then $\widehat{\R}$ and $\widetilde{\R}$ have the same definable sets.
\end{thm}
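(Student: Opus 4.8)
The plan is to reduce the theorem to a classification of the reducts of $\Rtilde^{R}$, for $R$ a finite subset of $\R$, that expand $\Rtilde$, and then to read off the conclusion from the hypothesis on the field of exponents.

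\emph{Reduction to the polynomially bounded case.} First I would dispose of the case where $\Rtilde$ is not polynomially bounded: then Miller's dichotomy \cite{mil:exp} puts $\exp$, hence $\log$, in $\Rtilde$, so $x^{r}=\exp(r\log x)$ (on $x>0$) is definable in $\Rtilde$ for every $r\in\R$, and $\Rtilde$, $\Rtilde^{\R}$ and the intermediate structure $\widehat{\R}$ all define the same sets. So I may assume $\Rtilde$ polynomially bounded; by \cite{mil:fields} the structure $\Rtilde^{\R}$ is then polynomially bounded, and hence so is its reduct $\widehat{\R}$. Write $k=k(\Rtilde)$, so $k=k(\widehat{\R})$ by hypothesis. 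I will repeatedly use Miller's theorem on power functions \cite{mil:power}: in a polynomially bounded expansion of the real field, $x\mapsto x^{r}$ is definable precisely when $r$ belongs to the field of exponents. In particular $\Rtilde$ and $\Rtilde^{k}$ define the same sets, and a power function is definable in $\widehat{\R}$ exactly when its exponent lies in $k$.

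\emph{Reduction to a classification of reducts.} I would then aim to prove the following statement $(\star)$: for every finite $R\subseteq\R$, every reduct of $\Rtilde^{R}$ that expands $\Rtilde$ is equal to $\Rtilde^{F}$, where $F$ denotes the set of $s\in\R$ for which $x\mapsto x^{s}$ is definable in that reduct. (By the previous paragraph applied to the reduct, which is polynomially bounded, $F$ is automatically a subfield of $\R$ containing $k$.) Granting $(\star)$, the theorem follows at once: if $X$ is definable in $\widehat{\R}$ then, since $\widehat{\R}$ is a reduct of $\Rtilde^{\R}$ and each formula involves only finitely many function symbols, $X$ is definable in $\Rtilde^{R}$ for some finite $R$; let $\mathcal S$ be the o-minimal reduct of $\Rtilde^{R}$ whose definable sets are exactly those definable in both $\widehat{\R}$ and $\Rtilde^{R}$, an expansion of $\Rtilde$. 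By $(\star)$, $\mathcal S=\Rtilde^{F}$; but every power function definable in $\mathcal S$ is definable in $\widehat{\R}$, so its exponent lies in $k(\widehat{\R})=k$, whence $F\subseteq k$, and as $k\subseteq F$ always we get $F=k$. Hence $X$ is definable in $\Rtilde^{k}$, i.e.\ in $\Rtilde$, so $\widehat{\R}$ and $\Rtilde$ have the same definable sets.

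\emph{Proof of $(\star)$, and the main obstacle.} To prove $(\star)$ I would argue by induction. Since rational powers are semialgebraic and $\Rtilde$ defines every power function with exponent in $k$, one may first replace $R$ by a finite subset of $\R$ generating the same field extension $k(R)/k$ and containing no generator redundant for the reduct at hand, and then induct on the number of such generators (with a secondary induction on $\td(k(R)/k)$). The base case concerns a single new power function $x^{r}$ with $r\notin k$: here I would combine the preparation theorem for polynomially bounded structures with valuation-theoretic and Hardy-field arguments, in the spirit of \cite[\S5]{vdd:mac:mar:logexp} and \cite{jks}, to show that every reduct of $\Rtilde^{\{r\}}$ above $\Rtilde$ is $\Rtilde^{F}$ with $k\subseteq F\subseteq k(r)$. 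The inductive step introduces the remaining power functions one at a time, and here lies the main obstacle: a rigidity statement asserting that if a reduct $\mathcal S$ above $\Rtilde$ defines a set in some presentation of which a monomial $x^{\lambda}$ occurs with $\lambda\notin F$, then $\mathcal S$ already defines a function of exact growth order $\lambda$, forcing $\lambda$ into the field of exponents of $\mathcal S$ and contradicting $\lambda\notin F$. Equivalently, one must show that several ``independent'' power functions cannot be only partially amalgamated by a reduct --- that the power functions implicit in a definable set are recoverable from it. For a single power this follows fairly directly from the growth and valuation structure of polynomially bounded expansions; handling the simultaneous interaction of several exponents is the substantial part, and is where the techniques of \cite{jks} would have to be pushed further.
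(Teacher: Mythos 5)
Your preliminary reductions are sound: the non-polynomially-bounded case is indeed trivial by Miller's dichotomy, compactness does confine the powers occurring in a definition of $X$ to a finite set $R$, and the intersection structure $\mathcal S$ is a legitimate reduct of $\widetilde{\R}^{R}$ expanding $\widetilde{\R}$ with field of exponents $k$. But your statement $(\star)$ is not a reduction of the theorem to something more tractable --- it is a restatement of it (indeed of the slightly stronger Corollary in the paper: uniqueness of the reduct of $\widetilde{\R}^{\R}$ with a prescribed field of exponents that expands $\widetilde{\R}$). Everything after that point is a description of what a proof would have to accomplish rather than a proof: you explicitly defer the ``rigidity statement'' that the power functions implicit in a definable set are recoverable from it, and you acknowledge that the simultaneous interaction of several independent exponents ``is the substantial part'' that ``would have to be pushed further.'' That substantial part is the entire content of the theorem, and it is missing.

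Concretely, two ideas you do not supply are essential. First, your argument works directly with definable subsets of $\R^{n}$ for arbitrary $n$, where Hardy-field and growth-order arguments (as in Soufflet and Kuhlmann--Kuhlmann) only control unary functions; the paper gets around this by passing to an $\omega$-saturated, hence non-archimedean, model and invoking van den Dries's lemma that it suffices to capture all unary definable functions there. Second, and more importantly, your rigidity step has no mechanism. The paper's mechanism is: write a unary $\widehat{\mathcal M}$-definable function piecewise as $F(|x-\delta|^{\alpha})$ with $F$ definable in $\widetilde{\mathcal M}$ (via quantifier elimination, the Preparation Theorem, and a monomialization lemma with uniform pivots), then take an $\widehat{\mathcal M}$-definable cell decomposition of the set $S=\{(x,y):g(x)=H(x,y)\}$ and analyse, in the value group, whether the cell containing the curve $x\mapsto|x-\delta|^{(1,\alpha')}$ is a graph (which forces $v(|x-p|)$ to be constant, so the offending power degenerates to a constant times a unit) or a strip (in which case a monomial with exponents in $\Lambda$ can be interposed between the bounding graphs, lowering $\dim\mathrm{Vect}_{\Lambda}(1,\alpha)$ and closing an induction). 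Without an argument of this kind --- or some substitute for it --- your induction on generators has no inductive step, and even your base case of a single irrational power is only asserted.
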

So $\Rtilde$ is maximal amongst reducts of $\Rtilde^\R$ with field of exponents $k(\Rtilde)$. 

An alternative way to interpret Theorem \ref{main} explains the title of the paper. Suppose $\Rtilde$ defines restricted power functions, and that $X$ is definable in $\Rtilde^\R$. If the field of exponents of the expansion $\Rtilde_X$ of $\Rtilde$ by $X$ is a certain field $\Lambda$, then Theorem \ref{main} shows that $X$ is definable in the expansion $\Rtilde^{\Lambda}$ of $\Rtilde$ by power functions with exponents in $\Lambda$. For $\Rtilde_X$ is an expansion of $\Rtilde^{\Lambda}$ and a reduct of $\Rtilde^{\mathbb R}$, and it has the same field of exponents $\Lambda$ as $\Rtilde^{\Lambda}$ (one has $k(\Rtilde^{\Lambda})=\Lambda$ in view of \cite[Theorem 1.4]{mil:fields}). So, roughly, if $X$ is defined from $\Rtilde$ and powers, then $X$ admits a definition which avoids all those powers that are not in return defined by $X$ and $\Rtilde$. We illustrate this with an example.
\begin{ex}
We define $f:\mathbb R^2\to\mathbb R$ by
\begin{equation}\label{eq:ex}
f(x,y)=\left\{\begin{array}{ll}\displaystyle y^{\frac{1}{\pi}}g\left(\frac{y}{x^{\pi}}-1\right),& \text{if}\; 0<x<1\;\text{and}\; x^{\pi}<y<x^{\pi}(1+x),\\
x & \text{otherwise},\end{array}\right.
\end{equation}
where $g$ is an analytic function on $(-1,+\infty)$ that we reveal below. The function $f$ is definable in $\mathbb R_{\text{an}}^{\mathbb R}$, being a composition of algebraic functions, powers, and restricted analytic functions (only the restriction of $g$ to $(0,1)$ intervenes) on domains defined using the same functions. 
Notice that the given definition involves unrestricted irrational power functions, since the powers in $y^{\frac{1}{\pi}}$ and $x^{\pi}$ act on variables that  approach zero.
However, with our particular $g$, the field of exponents of the expansion $\mathbb{R}_{\text{an},f}$ of $\mathbb{R}_{\text{an}}$ by $f$ is $\mathbb Q$. So the expansion $\mathbb{R}_{\text{an},f}$ of $\mathbb{R}_{\text{an}}$ is a reduct of $\mathbb R_{\text{an}}^{\mathbb R}$ with the same field of exponents as $\mathbb{R}_{\text{an}}$. Then Theorem \ref{main} asserts that $f$ is definable in $\mathbb R_{\text{an}}$; in other terms, $f$ can be defined with no use of irrational powers.
As the reader may have guessed our function $g$ is given by $g(t)=(1+t)^{-\frac{1}{\pi}}$, so $f$ admits actually the (subanalytic) definition $f(x,y)=x$. In this example, irrational powers  were indeed easy to avoid. The expression (\ref{eq:ex}) of $f$ now seems artificial, but has been chosen in view of our proof, because \eqref{eq:ex} is an admissible output of the Preparation Theorem of van den Dries, Speissegger (\cite{vdd:spe:preparation}) applied to $f$ in the structure $\mathbb R_{\text{an}}^{\mathbb R}$.
\end{ex}

In the aforementioned \cite{ran:vddmilconj}, Randriambololona shows that there are infinitely many different maximal polynomially bounded reducts of 
$\mathbb R_{\text{an},\exp}$. Randriambololona's arguments for producing different maximal polynomially bounded reducts of 
$\R_{\text{an},\exp}$ make heavy use of the exponential function, and do not adapt to the polynomially bounded setting. In our setting, we do not know whether $\Rtilde$ is the unique maximal reduct of $\Rtilde^{\mathbb R}$ 
with field of exponents $k(\Rtilde)$. We leave it 
as an open question, that we find exiting because we suspect the answer might be negative. Randriambololona also mention that, if van den Dries and Miller's conjecture was true, then $\mathbb R_{\text{an}}^{\mathbb R}$ would be the unique maximal polynomially bounded reduct of $\R_{\text{an},\exp}$ that expands $\mathbb R_{\text{an}}$ (even, $\mathbb R_{\text{an}}^{\mathbb R}$ would be the unique polynomially bounded reduct of $\R_{\text{an},\exp}$ with field of exponents $\mathbb R$ that expands $\mathbb R_{\text{an}}$). Our theorem proves an analogue of this,  {with} $\Rtilde$ playing the role of $\mathbb R_{\text{an}}$, {the expansion} $\Rtilde^{\Lambda}$ the role of $\mathbb R_{\text{an}}^{\mathbb R}$ and  {the structure} $\Rtilde^{\mathbb R}$ the role of $\mathbb R_{\text{an},\exp}$. Indeed, if $\widehat{\R}$ is an expansion of $\Rtilde$ with $k(\widehat{\R})=\Lambda$, then $\widehat{\R}$ is an expansion of $\Rtilde^{\Lambda}$, so if $\widehat{\R}$ is also a reduct of $\Rtilde^{\mathbb R}$, our theorem applies then $\Rtilde^{\Lambda}$ and $\widehat{\R}$ define the same sets. So Theorem \ref{main} proves the following:
\begin{cor}
Suppose $\Rtilde$ is an o-minimal expansion of the real field that defines restricted power functions, and $\Lambda\subset\mathbb R$ is a field extension of $k(\widetilde{\mathbb R})$. Then $\Rtilde^{\Lambda}$ is the unique (and in particular, unique maximal) reduct of $\Rtilde^{\mathbb R}$ with field of exponents $\Lambda$ that expands $\Rtilde$.
\end{cor}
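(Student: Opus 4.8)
The plan is to peel the statement down, by routine reductions, to a single situation treated with the van den Dries--Speissegger preparation theorem together with an asymptotic analysis exploiting the field-of-exponents hypothesis. First I would dispose of the case in which $\Rtilde$ is not polynomially bounded: by Miller's dichotomy \cite{mil:exp} it then defines the unrestricted exponential, hence every real power, so $\Rtilde^{\R}=\Rtilde$ and $\widehat{\R}$, squeezed between the two, equals $\Rtilde$. From now on $\Rtilde$ is polynomially bounded, whence so is $\Rtilde^{\R}$ \cite{mil:fields}. Since $\widehat{\R}$ and $\Rtilde$ have the same definable sets as soon as every $\widehat{\R}$-definable set is $\Rtilde$-definable, and any such set, being definable in the reduct $\widehat{\R}$ of $\Rtilde^{\R}$, is already definable in $\Rtilde^{\Lambda}$ for some finitely generated field extension $\Lambda$ of $k:=k(\Rtilde)$, it is enough to prove: if $f$ is definable in $\Rtilde^{\Lambda}$ and $k(\Rtilde_{f})=k$ (where $\Rtilde_{f}$ denotes $\Rtilde$ expanded by $f$), then $f$ is definable in $\Rtilde$ --- and the hypothesis $k(\Rtilde_{f})=k$ is automatic for $\widehat{\R}$-definable $f$, since $\Rtilde\subseteq\Rtilde_{f}\subseteq\widehat{\R}$. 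Peeling the generators of $\Lambda$ off one at a time, using \cite[Theorem 1.4]{mil:fields} to control the fields of exponents at each step, I would further reduce to $\Lambda=k(\alpha)$ for a single real number $\alpha$.

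The core is an induction on the number of variables; by the usual o-minimal machinery I would reduce to the case where $f$ is a function. For one variable this is essentially one-variable preparation: near each of finitely many points $f$ has the form $x\mapsto c\,x^{\lambda}U(x^{\mu})$ with $U$ a restricted function definable in $\Rtilde$ --- here the hypothesis that $\Rtilde$ defines restricted power functions is used, so that restricted powers with irrational exponent count as $\Rtilde$-definable --- and the condition $k(\Rtilde_{f})=k$ forces $\lambda,\mu\in k$, so that $f$ is definable in $\Rtilde$. For the inductive step, with $f\colon\R^{n+1}\to\R$, I would apply the preparation theorem of van den Dries and Speissegger \cite{vdd:spe:preparation} to $f$ in $\Rtilde^{k(\alpha)}$, obtaining a finite partition of $\R^{n+1}$ into cells and, on each cell, writing $x\in\R^{n}$ for the first coordinates and $y$ for the last,
\[
f(x,y)=a(x)\,|y-\theta(x)|^{\lambda}\,\Phi\bigl(b_{1}(x)|y-\theta(x)|^{\mu_{1}},\dots,b_{p}(x)|y-\theta(x)|^{\mu_{p}}\bigr),
\]
where $a,\theta,b_{1},\dots,b_{p}$ are definable in $\Rtilde^{k(\alpha)}$, the exponents $\lambda,\mu_{j}$ lie in $k(\alpha)$, and $\Phi$ is a restricted unit, which is definable in $\Rtilde$ since $\Rtilde$ defines restricted powers. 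As the Example in the introduction warns, the cell, the center $\theta$ and the coefficient functions $a,b_{j}$ may genuinely involve powers with exponents outside $k$ (the function $g$ of the Example is precisely such a restricted unit $\Phi$).

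The heart of the matter is to simplify this presentation using $k(\Rtilde_{f})=k$. Restricting $f$ to a generic fibre and repeatedly peeling off leading terms, every exponent occurring in the asymptotic expansion of $f$ along that fibre lies in $k(\Rtilde_{f})=k$; this rests on the standard fact, provable purely from o-minimality via the observation that $\mu\mapsto\lim_{t\to 0^{+}}h(\mu t)/h(t)=\mu^{r}$ is a definable function, that a definable one-variable function asymptotic to $c\,t^{r}$ at $0$ already defines $t\mapsto t^{r}$. This forces $\lambda\in k$, and, after discarding any argument on which $\Phi$ does not genuinely depend (the remaining exponents of $\Phi$ being in $k$ because $\Phi$ is definable in $\Rtilde$), it forces each $\mu_{j}\in k$ too. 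It then remains to deal with $a,\theta,b_{j}$ and the cells. The plan is to normalise the prepared form so that it is reduced --- no superfluous centers or monomials --- and to amalgamate adjacent cells across which the reduced formulas agree; the crucial claim, and this is the hard part, is that after this normalisation the surviving centers and coefficient functions are recoverable from $f$ itself (as its genuine zero and pole loci and its boundary limits), hence definable in $\Rtilde_{f}$ and so with field of exponents inside $k$, whereas artificial data such as the $x^{\pi}$ of the Example cancels and disappears in the normalisation. Granting this, $a,\theta,b_{j}$ and the walls of the normalised cells are objects of at most $n$ variables, definable in $\Rtilde^{k(\alpha)}$ and with field of exponents inside $k$, so by the inductive hypothesis they are definable in $\Rtilde$; substituting back, and using $\lambda,\mu_{j}\in k=k(\Rtilde)$ together with the $\Rtilde$-definability of $\Phi$, we conclude that $f$ is definable in $\Rtilde$. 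The main obstacle is precisely this reconciliation of the preparation data with the bound $k(\Rtilde_{f})=k$: isolating the part of the preparation that $f$ genuinely sees, and showing that the rest is inessential.
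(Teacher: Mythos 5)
This corollary is, in the paper, a two-line consequence of Theorem \ref{main}: since $k(\widehat{\R})=\Lambda$, the structure $\widehat{\R}$ defines every power function with exponent in $\Lambda$ and hence expands $\Rtilde^{\Lambda}$; by Miller's theorem $\Rtilde^{\Lambda}$ defines restricted powers and has field of exponents exactly $\Lambda$; so Theorem \ref{main}, applied with $\Rtilde^{\Lambda}$ in the role of the base structure and $\widehat{\R}$ as the intermediate reduct of $(\Rtilde^{\Lambda})^{\R}=\Rtilde^{\R}$, gives the conclusion. You do not take this route: instead you attempt to reprove the main theorem from scratch by induction on the number of variables directly over $\R$, via the van den Dries--Speissegger preparation theorem. (Your preliminary reductions --- Miller's dichotomy for the non-polynomially-bounded case, passage to finitely generated $\Lambda$ and then to a single generator --- are fine.)

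The difficulty is that your core step is exactly the hard part of the paper, and you leave it as an unproven claim. Having prepared $f$ on a cell as $a(x)|y-\theta(x)|^{\lambda}\Phi(\dots)$ with data definable only in $\Rtilde^{k(\alpha)}$, you need the $n$-variable objects $a,\theta,b_j$ and the cell walls to have field of exponents inside $k$ before the inductive hypothesis can touch them. You assert that after an unspecified ``normalisation'' these become ``recoverable from $f$ itself (as its genuine zero and pole loci and its boundary limits)'', but you give no construction of this normalisation and no argument that it exists; the paper's own Example (where $f(x,y)=x$ admits a preparation with center $x^{\pi}$ and coefficient $y^{1/\pi}$) shows precisely that preparation data is highly non-unique and need not be definable from $f$, and the phrase ``Granting this'' concedes the point. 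The paper resolves this not by normalising the preparation over $\R$ but by a different mechanism: a model-theoretic reduction (Claim \ref{cl:unary}) to unary functions over an $\omega$-saturated, hence non-archimedean, model, where the natural valuation lets one argue (Lemma \ref{lem:4}) that either $v(|x-p|)$ is forced to be constant --- so the offending factor is a constant times a unit and its real powers are harmless --- or the graph/strip constraint can be replaced by a monomial with exponents in $\Lambda$. No analogue of this valuation argument is available over $\R$ itself, and nothing in your sketch substitutes for it; as written, the proposal is a plan whose central claim is missing.
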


Theorem \ref{main} also extends partial results around van den Dries and Miller conjecture.
The main advances in proving this conjecture concern functions of one variable.
Thus, Soufflet shows in \cite{sou:vddmilconj} that any expansion of $\mathbb R_{\text{an}}^{\mathbb R}$ that is a polynomially bounded reduct of $\mathbb 
R_{\text{an},\exp}$ defines the same one variable functions as $\mathbb R_{\text{an}}^{\mathbb R}$ (and so the same subsets of $
\mathbb R^2$ by cell decomposition). So from the point of view of definable unary functions, $\mathbb R_{\text{an}}^{\mathbb R}$ is maximal. 
A similar result is obtained 
independently by Kuhlman and Kuhlman in \cite{kuh:kuh}, dealing with arbitrary
polynomially bounded expansions $\widetilde{\mathbb R}$ of the field of reals in which the restricted exponential function is definable, rather 
than with $\mathbb R_{\text{an}}$ -- van den Dries and Miller focus on $\mathbb R_{\text{an}}$ but the generalized conjecture seems also natural (in 1996, few o-minimal structures were known that are not reducts or expansions of $\mathbb R_{\text{an}}$; from Le Gal \cite{leg:generic}, there are however polynomially bounded structures that are even not compatible with $\mathbb R_{\text{an}}$).
Kuhlman and Kuhlman moreover work in an arbitrary model $M$ of the complete theory $\text{Th}(\widetilde{\mathbb R}_{\exp})$ of the 
expansion of $\widetilde{\mathbb R}$ by the exponential. Namely, they prove (with obvious notations)
that if $\widehat{\mathcal M}$ is a polynomially bounded structure on $M$ that is an expansion of 
$\widetilde{\mathcal M}^{\mathbb R}$ and reduct of $\widetilde{\mathcal M}_{\exp}$, then $\widehat{\mathcal M}$ and 
$\widetilde{\mathcal M}^{\mathbb R}$ have the same Hardy field. As shown by Randriambololona (\cite{ran:vddmilconj}), having the same Hardy field and having the same unary functions are not equivalent in general, whereas they are over the reals. So \cite{kuh:kuh} answers, at the level of unary functions, a generalization of van den Dries and Miller conjecture for polynomially bounded expansions of the real field in which the restricted exponential is definable.
It is also possible to formulate a generalization of the conjecture of van den Dries and Miller to smaller fields of exponents. Our 
theorem combines naturally with \cite{kuh:kuh} to solve this question at the level of unary functions. 
\begin{propn}\label{prop:exp}
Let $\Rtilde$ be an o-minimal structure that defines the restricted exponential function, and let $\widehat{\mathbb R}$ be a polynomially bounded reduct of $\widetilde{\mathbb R}_{\exp}$ that expands $\Rtilde$. If $k(\widehat{\mathbb R})=\Lambda$, then $\widehat{\mathbb R}$ and $\widetilde{\mathbb R}^{\Lambda}$ defines the same one variable functions (then have the same definable subsets of $\mathbb R^2$).
\end{propn}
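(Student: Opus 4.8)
The plan is to derive Proposition~\ref{prop:exp} from Kuhlmann--Kuhlmann's theorem \cite{kuh:kuh} together with Theorem~\ref{main}, working over the real field throughout. A few preliminary observations first: although it is not part of the hypothesis, $\Rtilde$ is polynomially bounded, being a reduct of the polynomially bounded structure $\widehat{\R}$; and since $\Rtilde$ defines the restricted exponential it defines restricted power functions, hence so does $\widehat{\R}$. Moreover, because $\widehat{\R}$ is polynomially bounded, defines restricted powers, and has field of exponents $\Lambda$, it defines $x\mapsto x^\lambda$ for every $\lambda\in\Lambda$ (the fact, due to Miller \cite{mil:fields}, already used in the discussion of Theorem~\ref{main}); as $\widehat{\R}$ also expands $\Rtilde$, it follows that $\widehat{\R}$ is an expansion of $\widetilde{\R}^\Lambda$. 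This already gives one of the two inclusions we need: every one variable function definable in $\widetilde{\R}^\Lambda$ is definable in $\widehat{\R}$.

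For the reverse inclusion, fix a one variable function $f$ definable in $\widehat{\R}$ and pass to the expansion $\widehat{\R}^{\mathbb R}$ of $\widehat{\R}$ by all real power functions. Then $\widehat{\R}^{\mathbb R}$ is polynomially bounded, because $\widehat{\R}$ is polynomially bounded and defines restricted powers \cite{mil:fields}; it expands $\widetilde{\R}^{\mathbb R}$, since it expands $\Rtilde$ and contains every real power; and it is a reduct of $\widetilde{\R}_{\exp}$, since $\widehat{\R}$ is and $\widetilde{\R}_{\exp}$ defines the logarithm, hence every $x^r=\exp(r\log x)$. So \cite{kuh:kuh} applies, with $\Rtilde$ playing the role of the base structure and $\mathbb R$ that of the ambient model, and yields that $\widehat{\R}^{\mathbb R}$ and $\widetilde{\R}^{\mathbb R}$ have the same Hardy field. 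Over the reals this forces $\widehat{\R}^{\mathbb R}$ and $\widetilde{\R}^{\mathbb R}$ to define the same one variable functions, as recalled in the introduction (following \cite{ran:vddmilconj}). In particular $f$, being definable in $\widehat{\R}$ and a fortiori in $\widehat{\R}^{\mathbb R}$, is definable in $\widetilde{\R}^{\mathbb R}$.

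Now let $\Rtilde_f$ denote the expansion of $\Rtilde$ by $f$, and set $\Lambda_f=k(\Rtilde_f)$. Since $\Rtilde_f$ is a reduct of $\widehat{\R}$ it is polynomially bounded and $\Lambda_f\subseteq k(\widehat{\R})=\Lambda$; since $\Rtilde_f$ expands $\Rtilde$ we have $k(\Rtilde)\subseteq\Lambda_f$; and since $f$ is definable in $\widetilde{\R}^{\mathbb R}$ by the previous paragraph, $\Rtilde_f$ is a reduct of $\Rtilde^{\mathbb R}$. Thus $\Rtilde_f$ is a reduct of $\Rtilde^{\mathbb R}$ which expands $\Rtilde$ and has field of exponents $\Lambda_f$, so Theorem~\ref{main}, in the reformulation given immediately after its statement (applied with $X=f$), shows that $f$ is definable in $\Rtilde^{\Lambda_f}$, hence in $\widetilde{\R}^\Lambda$ since $\Lambda_f\subseteq\Lambda$. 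Combined with the first inclusion, $\widehat{\R}$ and $\widetilde{\R}^\Lambda$ define the same one variable functions; by cell decomposition they then have the same definable subsets of $\mathbb R^2$.

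The step I expect to require the most care is the passage to $\widehat{\R}^{\mathbb R}$ and the verification that it satisfies the hypotheses of \cite{kuh:kuh} --- in particular that adjoining all real powers to $\widehat{\R}$ keeps it polynomially bounded and keeps it a reduct of $\widetilde{\R}_{\exp}$ --- together with the bookkeeping of the reduct and expansion relations needed to feed $\Rtilde_f$ into Theorem~\ref{main}. It is also worth emphasising that the equivalence between ``having the same Hardy field'' and ``defining the same one variable functions'' holds only over the real field, which is why this argument, unlike the unary part of \cite{kuh:kuh}, does not pass to arbitrary models.
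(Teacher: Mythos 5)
Your proposal is correct and follows essentially the same route as the paper: apply the Kuhlmann--Kuhlmann theorem (to an expansion by all real powers, so that the field-of-exponents-$\R$ hypothesis is met) to conclude that $f$ is definable in $\Rtilde^{\R}$, and then feed the expansion of $\Rtilde$ by $f$ into Theorem~\ref{main} to avoid the exponents outside $\Lambda$. The only difference is that the paper reduces to germs at each point (including $\pm\infty$) and applies the Hardy field comparison germ-by-germ, whereas you invoke as a black box the equivalence, over the reals, between having the same Hardy field and having the same unary functions --- precisely the local-to-global compactness argument that the paper's germ reduction carries out explicitly.
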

So, at the level of one variable functions, $\Rtilde^{\Lambda}$ is maximal amongst reducts of $\Rtilde_{\exp}$ with field of exponents $\Lambda$. Similarly, our theorem means that the original conjecture of van den Dries and Miller implies the generalization with smaller fields of exponents. 

To complete this exposition we state a corollary of the preceding proposition, obtained with the field of reals with restricted exponential as $\widetilde{\mathbb R}$, the expansion of $\widetilde{\mathbb R}$ by $f$ as $\widehat{\mathbb R}$ and $\mathbb Q$ as $\Lambda$. This statement was the initial motivation for this work.

\begin{cor}\label{subanalyticinrexp} Suppose that $f:\R\to \R$ is definable in both $\R_{\text{an}}$ and $\R_{\exp}$. Then $f$ is definable in the expansion of the real field by the restricted exponential function.
\end{cor}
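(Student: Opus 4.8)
The plan is to apply Proposition \ref{prop:exp}. Let $\Rtilde$ denote the expansion of the real field by the restricted exponential function, let $\widehat{\R}$ be the expansion of $\Rtilde$ by $f$, and take $\Lambda=\mathbb Q$. The corollary is precisely the assertion that $f$ is definable in $\Rtilde$, so it suffices to check the hypotheses of Proposition \ref{prop:exp} for this choice and then to observe that adjoining power functions with rational exponents to a structure expanding the real field changes nothing, since such functions are semialgebraic.

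First I would verify the set-up. The restricted exponential is a restricted analytic function, so $\Rtilde$ is a reduct of $\R_{\text{an}}$; hence $\Rtilde$ is o-minimal and, trivially, defines the restricted exponential. Since the restricted exponential is definable from the unrestricted one, adjoining the unrestricted exponential to $\Rtilde$ gives back $\R_{\exp}$, so that $\Rtilde_{\exp}$ and $\R_{\exp}$ are interdefinable. As $f$ is definable in $\R_{\exp}$ by hypothesis and $\Rtilde$ is a reduct of $\R_{\exp}$, the structure $\widehat{\R}=\langle\Rtilde,f\rangle$ is a reduct of $\Rtilde_{\exp}$, and it plainly expands $\Rtilde$.

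The one place where both hypotheses on $f$ are used, and not just definability in $\R_{\exp}$, is in checking that $\widehat{\R}$ is polynomially bounded. For this I would note that $\Rtilde$ is a reduct of $\R_{\text{an}}$ and that $f$ is, by the other hypothesis, definable in $\R_{\text{an}}$, so $\widehat{\R}$ is itself a reduct of $\R_{\text{an}}$; since $\R_{\text{an}}$ is polynomially bounded, so is every reduct of it, in particular $\widehat{\R}$. The same observation shows the field of exponents of $\widehat{\R}$ is contained in that of $\R_{\text{an}}$, namely $\mathbb Q$, while it contains $\mathbb Q$ because the power functions with rational exponents are semialgebraic; hence $k(\widehat{\R})=\mathbb Q=\Lambda$.

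Proposition \ref{prop:exp} now applies and gives that $\widehat{\R}$ and $\Rtilde^{\mathbb Q}$ define the same one-variable functions. Again because the rational power functions are semialgebraic, $\Rtilde^{\mathbb Q}$ and $\Rtilde$ are interdefinable, so $\widehat{\R}$ and $\Rtilde$ define the same one-variable functions. Finally $f$ is a one-variable function definable in $\widehat{\R}$, hence definable in $\Rtilde$, which is the expansion of the real field by the restricted exponential function, as required. There is no real obstacle here beyond Proposition \ref{prop:exp} itself; the only verification that takes a moment's thought is the identification of $\widehat{\R}$ as a reduct of $\R_{\text{an}}$, which is what delivers polynomial boundedness.
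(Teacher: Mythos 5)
Your proposal is correct and is essentially the paper's own argument: the corollary is obtained by applying Proposition \ref{prop:exp} with $\Rtilde$ the field of reals with the restricted exponential, $\widehat{\R}=\langle\Rtilde,f\rangle$ and $\Lambda=\Q$, the point being that definability of $f$ in $\R_{\text{an}}$ forces $\widehat{\R}$ to be polynomially bounded with field of exponents $\Q$, and $\Rtilde^{\Q}$ is interdefinable with $\Rtilde$. Your verification of the hypotheses is accurate and fills in exactly the details the paper leaves implicit.
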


The text is organized to follow the structure of the proof of Theorem \ref{main}. 
In the first step, a model-theoretic trick reduces the problem to one variable functions, 
at the cost of working over  {non-archimedean} models of $\text{Th}(\widetilde{\mathbb R}^{\mathbb R})$. For notational purposes, given a real closed field $M$ underlying a model of this theory, we decorate $\mathcal M$ with the symbol decorating $\mathbb R$ in the corresponding expansion (this is explained formally in Section \ref{model-theoretic}). 
The model-theoretic reduction is given in Section \ref{model-theoretic}, and
the problem becomes to show that if $f:M\to M$ is definable in a reduct 
$\widehat{\mathcal M}$ of $\widetilde{\mathcal{M}}^{\mathbb R}$ with $k(\widetilde{\mathcal M})=k(\widehat{\mathcal M})$,
then $f$ is definable in $\widetilde{\mathcal M}$. We prove the latter in the same section, using the lemmas which occur later in the paper (and form the core of the proof).

Those lemmas use the technical but simple Lemma  \ref{lem:2}, based on the Preparation Theorem (\cite{vdd:spe:preparation}) of van den Dries and Speissegger. This technical lemma 
gives uniforms pivots, in the following sense. Given various $p_i$,  in different expressions involving powers of $x\mapsto |x-p_i|$, we can, piecewise,  
simultaneously rewrite the expressions in  a way that only depends on powers of $x\mapsto |x-p|$ with a uniform $p$. We recall the statement of the Preparation Theorem (on which our whole proof heavily relies) and prove 
Lemma \ref{lem:2} in Section \ref{preparation}.

To show that $f$ is definable in $\widetilde{\mathcal M}$, we first prove that $f$ has a particular form. Namely, it is piecewise given as the composition of a map definable in $\widetilde{\M}$ with real power functions. This is Lemma \ref{lemma1} in Section \ref{terms}. It is related to a result of Miller's \cite[Proposition 4.5]{mil:power}. 
We use the fact that, being definable in $\widetilde{\mathcal{M}}^{\mathbb R}$, the function $f$ is piecewise given by terms, and reason inductively on the complexity of the terms. The proof requires rewriting powers of $\widetilde{\mathcal M}$-definable maps, and this can be done after giving a monomial form to such expressions. This is Lemma \ref{lem:3} in Section \ref{monom}, related to Lion and Rolin \cite[Th\'eor\`eme 1 (together with \S1 and \S2 p. 862) and Th\'eor\`eme 3]{lio:rol} and Soufflet \cite[Theorem 2.3]{sou:vddmilconj}. The lemma pays particular attention to the exponents that appear in the process, which we use later. 

We then have to avoid the extra exponents involved in the composition of an $\widetilde{\mathcal M}$-definable map with powers, using the assumption that the resulting function is definable in $\widehat{\mathcal M}$. This is Lemma \ref{lem:4}, proven in Section \ref{avoid} from the Preparation Theorem again and arguments involving valuations, including the strong version of our lemma \ref{lem:3}. It completes the proof of Theorem \ref{main}. The version of Lemma \ref{lem:4} we prove is a bit more general than the one we use, since we think it could have some independent interest.

Finally, in Section \ref{co-def}, we restate a version of Kuhlmann and Kuhlmann's Theorem adapted to our framework, and combine it with Theorem \ref{main} to show Proposition \ref{prop:exp}.

\subsection*{Acknowledgements}
 {This work was done while the first author was visiting Chamb\'ery, in the days when such things were possible. He would like to thank the department there for being excellent hosts.}

\section{The model-theoretic setting and the proof}\label{model-theoretic}

Let $\Rtilde$ be a polynomially bounded o-minimal expansion of the real  {ordered} field $\Rbar$ with field of exponents $k(\Rtilde)$. Let $\widetilde{L}$ be the language of $\Rtilde$ and assume, without loss of generality, that $\widetilde{T}=\text{Th} (\Rtilde)$ has quantifier elimination and a universal axiomatisation. Suppose too that all restricted power functions are definable in $\Rtilde$. Let $\Rtilde^\R$ be the expansion of $\Rtilde$ by all power functions.  Suppose that $\widehat{\R}$ is a reduct (in the sense of definability) of $\Rtilde^\R$. To prove Theorem \ref{main}, we can assume without loss of generality that $\widehat\R$ is a syntactic expansion of $\Rtilde$. Recall that we write $k(\widehat{\R})$ for the field of exponents of $\widehat{\R}$. 

So we have to show that if $k(\widehat{\R})=k(\Rtilde)$ then $\widehat{\R}$ defines the same sets as $\widetilde{\R}$.


Let $\widetilde{L}^\R$ be the language of $\Rtilde^\R$ and $\widetilde{T}^\R$ be the theory of $\Rtilde^\R$. 

 {
	Note that it follows immediately from the o-minimality of the pfaffian closure of $\Rtilde$ (due to Speissegger \cite{patrick:pfaffclosure}) that the structure $\Rtilde^\R$ is o-minimal. }

We will frequently use the following result, which is due to Chris Miller.
\begin{thm}[Miller \cite{mil:power,mil:fields}]\label{th:mil:term} The structure $\Rtilde^\R$ is polynomially bounded. The theory $\widetilde{T}^\R$ has quantifier elimination and a universal axiomatization. Definable functions in $\widetilde{\R}^\R$ are given piecewise by $\widetilde{L}^\R$ terms.
\end{thm}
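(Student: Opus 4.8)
The plan is to prove the three assertions of Theorem~\ref{th:mil:term} essentially by reducing them to Miller's results on $\R_{\an}^{\R}$ via an ultrapower/model-completeness argument and his general theory of polynomially bounded structures. We are given that $\Rtilde$ is a polynomially bounded o-minimal expansion of the real field with field of exponents $k(\Rtilde)$, that $\widetilde T$ has quantifier elimination and a universal axiomatisation, and that $\Rtilde$ defines all restricted power functions. First I would treat \emph{polynomial boundedness} of $\Rtilde^{\R}$. The key input is Miller's theorem from \cite{mil:fields}: if an o-minimal expansion of the real field is polynomially bounded and defines the restrictions of the power functions to a neighbourhood of the origin, then adjoining all (global) real power functions yields again a polynomially bounded structure, indeed one with field of exponents $\R$. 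Since we have already observed that $\Rtilde^{\R}$ is o-minimal (from the o-minimality of the pfaffian closure), this applies directly and gives the first sentence.

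Next I would handle \emph{quantifier elimination and the universal axiomatisation} for $\widetilde T^{\R}$. The strategy here is to add function symbols to the language so that the structure becomes model complete in a syntactic way, and then to upgrade model completeness to quantifier elimination using that the new language is closed under the relevant operations. Concretely, one expands $\widetilde L$ by a function symbol for each real power function $x \mapsto x^{r}$ (extended by $0$ on $x \le 0$) to form $\widetilde L^{\R}$; since $\widetilde T$ already has QE and a universal axiomatisation and the power functions are total and defined by an existential-over-universal condition, one checks that $\widetilde T^{\R}$ is universally axiomatisable. For QE, the clean route is to invoke the general mechanism: a polynomially bounded o-minimal expansion of an ordered field whose language contains symbols for all the unary functions that occur, is near-model-complete, and when moreover the language is closed under composition and contains symbols for the relevant ``charts'' one gets full QE; this is exactly the situation engineered by Miller in \cite{mil:power} for $\R^{\R}$, and his argument is insensitive to replacing the polynomial part by an arbitrary polynomially bounded reduct $\Rtilde$ with QE. I would therefore cite \cite{mil:power,mil:fields} and indicate that the proof there goes through verbatim with $\Rtilde$ in place of the real field.

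For the third assertion, that definable functions in $\Rtilde^{\R}$ are \emph{given piecewise by $\widetilde L^{\R}$-terms}, I would argue from the QE just established together with o-minimality. Given a definable function $f \colon M \to M$, o-minimality lets us partition the domain into finitely many points and open intervals on each of which $f$ is either constant or continuous and monotone; on such an interval the graph of $f$ is a definable set, hence by QE defined by a quantifier-free $\widetilde L^{\R}$-formula in two variables, i.e.\ a Boolean combination of conditions $t_{1}(x,y) \ge 0$ for $\widetilde L^{\R}$-terms $t_{i}$. A standard o-minimality argument (on each piece one of the terms involving $y$ must actually pin $y$ down as an algebraic function of the remaining term-data, and one uses that the $\widetilde L^{\R}$-terms are closed under the algebraic operations and composition, plus that $\Rtilde$ itself has definable Skolem functions for such implicit definitions) shows that $f$ agrees with an $\widetilde L^{\R}$-term on each piece. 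This is again precisely Miller's argument in \cite{mil:power} and I would present it as such.

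The main obstacle is the quantifier elimination step: one must be careful that the closure properties of $\widetilde L^{\R}$-terms really do suffice, i.e.\ that the compositions and implicit-function constructions needed to absorb quantifiers stay within terms built from $\widetilde L$ and the power functions. This is where the hypotheses that $\widetilde T$ has QE and a universal axiomatisation, and that \emph{restricted} powers are already $\Rtilde$-definable, are used: the restricted powers let one reduce any occurrence of a global power of an $\widetilde L^{\R}$-term to a bounded situation where $\Rtilde$'s own QE takes over, while the field-of-exponents bookkeeping ensures the composite exponents stay in $\R$. Since all of this is carried out in detail by Miller in the case $\Rtilde = \Rbar$, the honest thing to do is to state that Theorem~\ref{th:mil:term} is obtained by rerunning \cite{mil:power,mil:fields} with $\Rtilde$ replacing the real ordered field, the only change being the harmless replacement of ``polynomial'' by ``$\widetilde L$-term'' throughout, and to refer the reader there for the computations.
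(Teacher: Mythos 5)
The paper does not prove this statement at all: Theorem~\ref{th:mil:term} is quoted from Miller, and the whole point of the set-up in Section~\ref{model-theoretic} (assuming WLOG that $\widetilde{T}$ has QE and a universal axiomatisation, and that restricted powers are $\Rtilde$-definable) is to put $\Rtilde$ in exactly the hypotheses under which Miller's theorems on power functions are stated for an arbitrary polynomially bounded o-minimal expansion of the real field, not just for $\Rbar$ or $\Ran$. So your overall plan --- polynomial boundedness from \cite{mil:fields}, QE and universal axiomatisability of $\widetilde{T}^\R$ from Miller's construction --- is the intended one, and for those two parts there is nothing to ``rerun'': Miller already works in this generality.

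There is, however, a genuine gap in your argument for the third assertion. Deducing ``piecewise given by terms'' from QE via cell decomposition and an implicit-function step does not work as stated: a quantifier-free $\widetilde{L}^\R$-definition of the graph of $f$ does not by itself produce a term for $f$. (Take $\Rbar$ expanded by a symbol for $\exp\restriction[0,1]$ but none for its inverse; the graph of $\log$ on $[1,e]$ is quantifier-free definable, yet $\log$ is not a term there.) The hypothesis that does the work is the \emph{universal axiomatisation}: together with QE it forces every substructure of a model of $\widetilde{T}^\R$ to be an elementary substructure, so for a definable $f$ and any point $a$, the value $f(a)$ lies in the substructure generated by $a$ and the parameters, i.e.\ $f(a)=t(a)$ for some term $t$; compactness then yields finitely many terms with definable pieces. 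This is a purely model-theoretic argument needing no o-minimality, no cell decomposition, and no ``algebraic pinning down of $y$''; your sketch never uses the universal axiomatisation in the place where it is indispensable. Relatedly, your justification that $\widetilde{T}^\R$ is universally axiomatisable because the power functions are ``total and defined by an existential-over-universal condition'' is not an argument; the universal axioms are the functional equations for powers and their compatibility with the restricted powers already in $\widetilde{L}$, as in Miller's construction.
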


Any model $\widetilde{\M}^\R$ of $\widetilde{T}^\R$ has a reduct $\widehat{\M}$ obtained as $\widehat{\R}$ is obtained from $\widetilde{\R}$. More precisely, $\widehat\R$ is an expansion of $\Rtilde$ by some sets $X_\iota$, with each $X_\iota$ the realisation in $\Rtilde^\R$ of a formula $\phi_\iota$. We define $\widehat\M$ to be the expansion of $\widetilde \M$ by the interpretations of the formulas $\phi_\iota$ in $\widetilde\M^\R$. The structures $\widehat\M$ and $\widehat\R$ are elementarily equivalent. 

By a result of van den Dries's (\cite[Lemma 4.7]{vdd:densepairs}), to reach our aim it is sufficient to establish the following.

\begin{claim}\label{cl:unary} Suppose that $\widetilde{\M}^\R$ is an $\omega$-saturated model of $\widetilde{T}^\R$. If $f:M\to M$ is definable in $\widehat{\M}$ then $f$ is definable in $\widetilde{\M}$.
\end{claim}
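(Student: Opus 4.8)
The plan is to reduce, via the Preparation Theorem of van den Dries and Speissegger applied in $\widetilde{\M}^\R$, the problem to understanding a single unary $\widehat{\M}$-definable function $f:M\to M$ on a single cell, and then to peel off the extra real exponents that enter when $f$ is written using $\widetilde{L}^\R$-terms. By Theorem~\ref{th:mil:term}, $f$ is given piecewise by $\widetilde{L}^\R$-terms, so after a finite partition of $M$ into intervals and points we may assume $f$ is a single $\widetilde{L}^\R$-term in $x$. The first substantive step is Lemma~\ref{lemma1}: on each piece, $f$ is of the form $x\mapsto h(x,p(x)^{r_1},\dots,p(x)^{r_m})$, more precisely a composition of an $\widetilde{\M}$-definable map with finitely many real power functions applied to $\widetilde{\M}$-definable arguments. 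This is proved by induction on term complexity, using Lemma~\ref{lem:3} to put the $\widetilde{\M}$-definable arguments of the powers into a monomial form $u\cdot\prod_i |x-p_i|^{s_i}$ (with $u$ a unit), and Lemma~\ref{lem:2} to arrange a uniform pivot $p$ so that all the $|x-p_i|$ can be replaced by powers of a single $|x-p|$ on a common piece.

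The second, and genuinely hard, step is Lemma~\ref{lem:4}: to show that the real exponents $r_1,\dots,r_m$ appearing in such a composition can be eliminated, given that $f$ itself is $\widehat{\M}$-definable and $k(\widehat{\M})=k(\widetilde{\M})$. Here is where the hypothesis on fields of exponents does its work. One considers the $\widehat{\M}$-definable germ of $f$ at a pivot, applies the Preparation Theorem to $f$ \emph{in $\widehat{\M}$ as well} (legitimate since $\widehat{\M}$ is a reduct of $\widetilde{\M}^\R$, hence polynomially bounded and o-minimal), and compares the two monomial descriptions: one coming from the $\widetilde{L}^\R$-term presentation above, one coming from $\widehat{\M}$. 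Matching leading exponents in the two monomializations forces the exponents occurring in the $\widehat{\M}$-presentation — which, by $k(\widehat{\M})=k(\widetilde{\M})$, lie in $k(\widetilde{\M})$ — to generate the same data as the $r_i$; a valuation-theoretic argument (tracking, via the value group and residue field of the Hardy-type valuation on $\widetilde{\M}^\R$, which real numbers can arise as exponents in a convergent asymptotic expansion) then shows the $r_i$ are redundant, i.e. $f$ agrees piecewise with an $\widetilde{\M}$-definable function. The strong, exponent-tracking form of Lemma~\ref{lem:3} is exactly what makes this bookkeeping possible.

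Having shown that every unary $\widehat{\M}$-definable $f:M\to M$ is piecewise $\widetilde{\M}$-definable, we are done: Claim~\ref{cl:unary} is precisely this statement for $\omega$-saturated $\widetilde{\M}^\R$, and it then feeds into van den Dries's Lemma~\ref{vdd:densepairs}-style criterion to upgrade from unary functions to all definable sets, completing the proof of Theorem~\ref{main}. The $\omega$-saturation is used to ensure that the finitely many pieces, pivots, exponents and units produced by the Preparation Theorem can be chosen uniformly and that the germ manipulations at the pivots are realised inside $M$; without it one would only get a piecewise statement with parameters varying in an elementary extension.

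I expect the main obstacle to be the exponent-elimination in Lemma~\ref{lem:4}: one must rule out ``hidden'' cancellations among the $|x-p|^{r_i}$ that could make a genuinely irrational combination become $\widehat{\M}$-definable without each individual $r_i$ being in $k(\widetilde{\M})$. Handling this requires carefully controlling how exponents propagate through sums (not just products) of prepared monomials — two prepared terms with different leading exponents cannot cancel, but terms with equal leading exponents can interact — and this is where the uniform-pivot Lemma~\ref{lem:2} and the precise exponent accounting in Lemma~\ref{lem:3} are indispensable. The model-theoretic wrapper (reduction to unary, use of saturation, appeal to van den Dries) is comparatively routine.
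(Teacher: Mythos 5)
Your proposal follows the paper's proof of the Claim essentially verbatim: reduce to $\widetilde{L}^\R$-terms via Theorem \ref{th:mil:term}, apply Lemma \ref{lemma1} to write $f$ piecewise as $F(|x-\delta|^{\alpha})$, and then invoke Lemma \ref{lem:4} together with $k(\widehat{\M})=k(\widetilde{\M})$ to replace the exponents by ones lying in $k(\widetilde{\M})$, so that $x\mapsto|x-\eta|^{\beta}$ and hence $f$ are $\widetilde{\M}$-definable. The only slight misstatement is the role of $\omega$-saturation, which the paper uses to guarantee that $M$ is non-archimedean (so the valuation arguments behind Lemmas \ref{lem:3} and \ref{lem:4}, via Proposition \ref{prop:units}, have content) and to feed into van den Dries's criterion, rather than to make the preparation data uniform.
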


To see that our theorem follows from the claim, suppose that $X\subseteq \R^n$ is definable in $\widehat{\R}$. Let $\widetilde{\M}^\R$ be an $\omega$-saturated model of $\widetilde{T}^\R$. Suppose that $X=\phi(\widehat{\R})$ for some $\widehat{L}$-formula $\phi$ with parameters from $\R$. By the claim, and Lemma 4.7 in \cite{vdd:densepairs} every set definable in $\widehat{\M}$ is definable in $\widetilde{\M}$. So there is an $\widetilde{L}$-formula $\psi$ and parameters $a$ in $M$ such that $\widehat{\M}\models \forall x (\phi(x) \iff \psi(x,a))$. So $\widehat{\M}\models \exists y \forall x (\phi(x) \iff \psi (x,y))$ and so $\widehat{\R}\models\exists y \forall x (\phi(x) \iff \psi (x,y))$, so $X$ is definable in $\widetilde{\R}$.


 {We now prove the Claim, using results from later sections.}
\begin{proof}[Proof of Claim \ref{cl:unary}]
From Theorem \ref{th:mil:term}, $f$ is piecewise given by terms in the language $\widetilde{L}^{\mathbb R}$ with parameters from $M$. By Lemma \ref{lemma1}, there is a partition of $M$ into open intervals $I_1,\ldots, I_r$ and a finite set of points such that for each $i$ there exist $\alpha \in \R^m$, an $\widetilde\M$-definable $F$ with domain a subset of $M^m$ and $\delta\in M$ (all depending on $i$) such that
\[
f(x)= F\left(|x-\delta|^\alpha\right)
\]
on $I_i$. But our $f$ is also definable in $\widehat \M$. So by Lemma \ref{lem:4} we can refine the partition (but keep the notation) and get that
\[
f(x)=G\left(|x-\eta|^{\beta}\right),
\]
on $I_i$, for some $G: M^n\to M$ definable in $\widetilde{\mathcal M}$, $\beta \in k(\widehat{\mathcal M})^n$, $\eta \in M$.
Since $k(\widehat{\mathcal M})=k(\widetilde{\mathcal M})$ the function $x\mapsto |x-\eta|^{\beta}$ is in fact definable in $\widetilde{\mathcal M}$. And then $f$ is definable in $\widetilde{\mathcal M}$.
\end{proof}
\section{The valuation, the Preparation Theorem and uniform pivots}\label{preparation}

In this section we recall some classical facts about the interaction of the natural valuation map, o-minimality and polynomial boundedness. We let $\mathcal M$ be an o-minimal expansion of a real closed field, with underlying set $M$.

The natural valuation $v:M^\times\to M^\times/U$ is the quotient mapping from the group of invertible elements $M^\times$ to its quotient by the subgroup $U$ of units 
$$
U:=\left\{x\in M:\; \text{there is some } n\in \mathbb N \text{ such that } \frac{1}{n}<|x|<n\right\}.
$$ The value group $M^\times/U$ is a totally ordered group  which we write additively, where the order is conventionally chosen in such a way that $|x|\le |y|\Rightarrow v(x)\ge v(y)$, and $v$ is extended to $M$ by setting $v(0)=\infty$.
 {The valuation is only useful if $M$ is non-archimedean, for the value group is trivial otherwise. (Note that the model $\widetilde{\mathcal{M}}^{\mathbb R}$ we introduced in Section 2, in reducing our theorem to the claim, is certainly non-archimedean since the type given by the sequence of formulas $x>n$, for natural numbers $n$, is realized in $M$, as $\widetilde{\M}^\R$ is $\omega$-saturated). }
	
Also, if $\mathcal M$ defines real power functions, it induces an external multiplication on the value group that turns it into an ordered vector space over $\mathbb R$, by setting $\lambda \cdot v(x) = v(|x|^{\lambda})$ for $\lambda\in\mathbb R$.

 {Combining o-minimality with the valuation gives rise to the following.}
\begin{propn}\label{prop:units}
 {Let $f:X\to M$ be definable in an o-minimal expansion  $\mathcal M$ of a non-archimedean real closed field $M$. Suppose that for all $x\in X$ there is an $n\in \mathbb N$ such that $1/n< f(x)<n$. Then there is some $n\in \mathbb N$ such that for all $x\in X$ we have $1/n<f(x)<n$.}

\end{propn}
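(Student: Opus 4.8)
The statement asserts that "pointwise boundedness away from $0$ and $\infty$" upgrades to "uniform boundedness" for a definable function $f$ on a definable set $X$. The plan is to argue by contradiction using o-minimality, in the form of the monotonicity theorem applied after passing to a one-variable reduction, or more directly by a cell-decomposition argument. Here is how I would carry it out. First, suppose toward a contradiction that no such $n$ works. Then for every $n\in\mathbb N$ either $\inf_X f = 0$ (in the sense that $f$ takes values $<1/n$) or $\sup_X f$ exceeds $n$. Replacing $f$ by $\max(f,1/f)$, which is again definable and $\geq 1$, we may assume $f\geq 1$ on $X$ and that $f$ is unbounded above on $X$; so it suffices to show that a definable $f:X\to M$ with $f\geq 1$ everywhere, for which each individual value $f(x)$ satisfies $f(x)<n_x$ for some $n_x\in\mathbb N$, is in fact globally bounded by some fixed $n\in\mathbb N$.

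The key step is to reduce to one variable and invoke o-minimality. Consider the definable set $Y=f(X)\subseteq M$, a definable subset of the line contained in $[1,\infty)$. By o-minimality, $Y$ is a finite union of points and open intervals; let $b=\sup Y\in M\cup\{+\infty\}$. If $b$ were a finite element of $M$, then every value of $f$ would be $\leq b$, and since the interval $[1,b]$ of $M$ is covered by the sets $\{x: 1\le x<n\}$... but that covering need not be finite in a nonarchimedean field, so I cannot conclude boundedness by a \emph{natural number} that way. Instead I use the hypothesis directly: the hypothesis says $Y\subseteq\bigcup_{n\in\mathbb N}(-n,n)$, i.e. $Y$ contains no element $y$ with $y>n$ for all $n\in\mathbb N$, equivalently $Y$ contains no positive-infinite element. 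So $\sup Y$, if attained or approached, cannot be positive-infinite; but a definable subset of $M$ whose supremum is not positive-infinite is bounded above by some element $c\in M$ that is itself not positive-infinite, and any element of $M$ that is not positive-infinite satisfies $c<n$ for a suitable $n\in\mathbb N$ — this last implication is precisely the assertion that an element failing to be infinite is bounded by a natural number, which holds because "positive-infinite" is defined as "larger than every natural number". Thus $f(x)\le c<n$ for all $x\in X$, and combined with $f\ge 1$ this gives the uniform conclusion $1/n<f(x)<n$.

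The one genuine subtlety — and the point I would be most careful about — is the dichotomy "either $\sup Y$ is a positive-infinite element of $M\cup\{\infty\}$, or $Y$ is bounded above by a finite (non-infinite) element of $M$." This is where o-minimality is used essentially: for an arbitrary subset of an ordered field there is no reason the supremum should live in the field, but for a \emph{definable} subset of an o-minimal structure, $Y$ is a finite union of points and intervals, so its supremum is either $+\infty$, or an endpoint of the topmost interval/point, hence an actual element of $M$; and if that element $c$ were positive-infinite we would contradict the hypothesis that $Y$ meets no "co-natural" cut. Once this dichotomy is in place the rest is bookkeeping. I expect no serious obstacle beyond stating this cleanly; the proposition is, as the paper says, the standard way o-minimality and the natural valuation interact, and the proof is short.
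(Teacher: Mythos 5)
Your proof is correct and is essentially the paper's argument viewed through the reciprocal: the paper bounds $\inf f(X)$ away from $0$ and the infinitesimals using that $f(X)$ is a finite union of points and intervals (then runs the same argument for $1/f$), while you bound $\sup$ of $\max(f,1/f)(X)$ away from the positive-infinite elements by the same o-minimality dichotomy. The one step you leave implicit --- that a definable subset of $M$ whose supremum is infinite (or $+\infty$) must itself contain an infinite element, because its topmost piece is a point or an interval --- is the exact counterpart of the paper's observation that an interval just above an infinitesimal infimum contains an infinitesimal, and is immediate.
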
 
\begin{proof}
 {We show that $f$ is bounded by below by a positive rational. The same argument for $1/f$ gives the upper bound.  
The function $f$ is definable and positive, so $f(X)$ is a finite union of points and intervals contained in $(0,+\infty)$. In particular, $f$ admits a nonnegative infimum, $a$ say. We claim that $v(a)\le 0$. So suppose that $v(a)>0$. Then by assumption $\{ a\} \not \subseteq f(V)$, so there is some interval $(a,b) \subseteq f(V)$. But then, as $a$ is either $0$ or a positive infinitesimal, there is some infinitesimal $c \in (a,b)$ with $c \in f(V)$, which contradicts our assumption (note that the $M$ has infinitesimals as it is non-archimedean). So $v(a)\le 0$ and hence $a> 1/n$ for some $n\in\mathbb N$.}

\end{proof}

The valuation of a function definable in a polynomially bounded o-minimal structure has strong regularities, as shown for instance by the preparation theorem of van den Dries and Speissegger \cite{vdd:spe:preparation}. We recall its statement, as we use it frequently in the following. Suppose now that $\M$ is a polynomially bounded o-minimal structure, expanding a real closed field $M$, and suppose that $\M$ is an elementary extension of an expansion of the real field. In the statement below, definable means definable in $\M$ (and cells are cells definable in $\M$).
\begin{thm} Suppose that $f_1,\ldots,f_m:M^{n+1}\to M$ are definable. Then there is a cell decomposition $\mathcal{C}$ of $M^{n+1}$
such that for each cell $C\in \mathcal{C}$ there are $\lambda_1,\ldots,\lambda_m\in k(\M)$ and definable functions $\theta,a_1,\ldots,a_m:M^n\to M$ and definable functions $u_1,\ldots,u_m:M^{n+1}\to M$ such that the graph of $\theta$ is disjoint from $C$, and for all $i=1,\ldots, m $ and $(x,y) \in C$ we have 
\[
f_i(x,y)= |y  - \theta(x)|^{\lambda_i} a_i(x) u_i(x,y)
\]
and
\[
|u_i(x,y)-1|<\frac{1}{2}.
\]
\end{thm}

This is only stated in \cite{vdd:spe:preparation} in the case that the underlying field $M$ is the real numbers. But it is clear that the proof there establishes the result above. (More care is needed in polynomially bounded structures whose theories have no archimedean model, see \cite{tyne:thesis}.) 

For simplicity, we will say that a function $u$ is a unit over  {$X$ if $X$ is included in the domain of $u$ and there is a positive integer $n$ such that $\frac{1}{n}<|u(x)|<n$ for all $x\in X$.} So, in the statement of the Preparation Theorem, the $u_i$ are units over $C$.
 {By Proposition \ref{prop:units}, for a function that is definable in an o-minimal expansion of a non-archimedean real closed field, being a unit is equivalent to have values that are all units.}

As a corollary of the Preparation Theorem, we get the following lemma. It could also be proved by a direct construction, but  {the} Preparation Theorem shortens the proof considerably.
Suppose that we have polynomially bounded o-minimal structures $\M_1,\ldots,\M_m$, each expanding the real closed field $M$. In 
the lemma, we write $\Lambda_i=k(\M_i)$, so $\Lambda_i$ is the field of exponents of $\M_i$. We denote by $(\Lambda_i^n)^*$ the
 space of $n$-linear forms with coefficients in $\Lambda_i$, and agree to apply an $\ell\in (\Lambda_i^n)^*$ to a $n$-tuple of real values, so $(\Lambda_i^n)^*$ is not strictly the dual space of $\Lambda_i^n$, but the corresponding subspace of the dual of $\mathbb R^n$. If $\alpha=(\alpha_1,\dots,\alpha_n)
\in\mathbb R^n$ is an $n$-tuple  {and $x \in M$ is positive,} we write $x^{\alpha}$ for the $n$-tuple $(x^{\alpha_1},\dots,x^{\alpha_n})
$. We make no distinction between an $m$-tuple of $n$-tuples and an $mn$-tuple; in particular, for $\alpha=(\alpha_1,\dots,\alpha_n)$, we have
$x^{(1,\alpha)} =(x,x^{\alpha})=(x,x^{\alpha_1},\dots,x^{\alpha_n})$.
The important point in the following statement is that $p_j$ depends on the interval $(a_j,a_{j+1})$ but not on the function (indexed by $i$).

\begin{lemma}\label{lem:2}
Let $(a,b)$ be an interval in $M$, and $n$ a positive integer. Suppose that
\begin{itemize}
\item $\ell_i\in(\Lambda_i^n)^*$, $\delta_i\in M$, $c_i\in M$, $\alpha^{(i)}\in\mathbb R^{n-1}$, with $\alpha^{(i)}=0$ if $\M_i$ does not define restricted power functions,
\item $u_i:M^{n}\to M$ is an $\M_i$-definable function such that $x\mapsto u_i(|x-\delta_i|^{(1,\alpha^{(i)})})$ is a unit over $(a,b)$,
\end{itemize}
for $i=1,\ldots,m$. Then there exists a finite partition of $(a,b)$ made of points and intervals,
$$(a,b)=\bigcup_{j=0}^s (a_j,a_{j+1})\cup \bigcup_{j=1}^{s} \{a_j\}$$ 
such that, for each $j= 0,\ldots s$, there exist $p_j\in M$, and for each $(i,j)\in\{1,\dots,m\}\times \{1,\dots,s\}$, there exist
\begin{itemize}
\item $c_{i,j}\in M$, $\ell_{i,j}\in(\Lambda_i^n)^*$,
\item $u_{i,j}:M^{n}\to M$ an $\M_i$-definable function such that $x\mapsto u_{i,j}\left(|x-p_j|^{(1,\alpha^{(i)})}\right)$ is a unit
over $(a_j,a_{j+1})$,
\end{itemize}
such that for all  $j=1,\ldots, s $ and for all $x\in(a_j,a_{j+1})$, we have
$$\;c_i|x-\delta_i|^{\ell_i(1,\alpha^{(i)})}u_i\left(|x-\delta_i|^{(1,\alpha^{(i)})}\right)=c_{i,j}|x-p_j|^{\ell_{i,j}(1,\alpha^{(i)})}u_{i,j}\left(|x-p_j|^{(1,\alpha^{(i)})}\right).$$
\end{lemma}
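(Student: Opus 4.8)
The plan is to monomialise the affine maps $x\mapsto x-\delta_i$ against a single pivot, uniformly in $i$, and then substitute into the given expressions; the one substantial input will be the Preparation Theorem applied to those affine maps. First I would let $q_1,\dots,q_N$ list the distinct values among $\delta_1,\dots,\delta_m$ and apply the Preparation Theorem to the affine functions $x\mapsto x-q_k$ over the real closed field $M$ (so the exponents it produces lie in $\mathbb{Q}$). Its crucial feature here is that on each cell it gives \emph{one} pivot serving all $k$ at once. Intersecting the resulting partition of $M$ with $(a,b)$ and adjoining $a,b$ gives the partition $(a,b)=\bigcup_j(a_j,a_{j+1})\cup\bigcup_j\{a_j\}$ of the statement; on $(a_j,a_{j+1})$ I take $p_j$ to be the pivot, which the theorem places outside that interval, and I obtain, for every $k$,
\[
x-q_k=|x-p_j|^{\lambda_{k,j}}\,b_{k,j}\,v_{k,j}(x)\quad\text{on }(a_j,a_{j+1}),
\]
with $\lambda_{k,j}\in\mathbb{Q}$, $b_{k,j}\in M\setminus\{0\}$, and $v_{k,j}$ a semialgebraic unit over $(a_j,a_{j+1})$ with $|v_{k,j}-1|<\frac{1}{2}$. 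Since $p_j\notin(a_j,a_{j+1})$ the sign of $x-p_j$ is constant there, so each $v_{k,j}$ is a semialgebraic function of $|x-p_j|$, say $v_{k,j}(x)=w_{k,j}(|x-p_j|)$, with values in $(\frac{1}{2},\frac{3}{2})$; being semialgebraic, $w_{k,j}$ is $\M_i$-definable for every $i$.

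Next, for a fixed pair $(i,j)$ I would take the index $k$ with $\delta_i=q_k$ and abbreviate $\lambda=\lambda_{k,j}$, $b=b_{k,j}$, $w=w_{k,j}$, $\ell=\ell_i$, $\alpha=\alpha^{(i)}$, $e=\ell(1,\alpha)$. Raising the identity above (with $q_k=\delta_i$, and $v_{k,j}>0$) to real powers gives, for each coordinate-exponent $\beta$ of $(1,\alpha)$,
\[
|x-\delta_i|^{\beta}=|b|^{\beta}\bigl(|x-p_j|^{\beta}\bigr)^{\lambda}w(|x-p_j|)^{\beta},\qquad |x-\delta_i|^{e}=|b|^{e}\,|x-p_j|^{\lambda e}\,w(|x-p_j|)^{e}.
\]
What makes this usable is that $\lambda\in\mathbb{Q}\subseteq\Lambda_i$: then $z\mapsto z^{\lambda}$ is definable in $\M_i$ on $(0,\infty)$, and $\lambda\ell\in(\Lambda_i^n)^*$ with $(\lambda\ell)(1,\alpha)=\lambda e$. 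So I would build the $\M_i$-definable map $\Phi_{i,j}\colon(t_\beta)_\beta\mapsto\bigl(|b|^{\beta}\,t_\beta^{\lambda}\,w(t_1)^{\beta}\bigr)_\beta$ (with $t_1$ the coordinate for $\beta=1$, extended by $0$ off the relevant domains), so that $\Phi_{i,j}(|x-p_j|^{(1,\alpha)})=|x-\delta_i|^{(1,\alpha)}$ on $(a_j,a_{j+1})$, and set
\[
c_{i,j}:=c_i|b|^{e},\qquad \ell_{i,j}:=\lambda\ell\in(\Lambda_i^n)^*,\qquad u_{i,j}(t):=w(t_1)^{e}\cdot(u_i\circ\Phi_{i,j})(t).
\]
Collecting factors then gives $c_i|x-\delta_i|^{e}u_i(|x-\delta_i|^{(1,\alpha)})=c_{i,j}|x-p_j|^{\ell_{i,j}(1,\alpha)}u_{i,j}(|x-p_j|^{(1,\alpha)})$ on $(a_j,a_{j+1})$; and since the value of $x\mapsto u_{i,j}(|x-p_j|^{(1,\alpha)})$ is $w(|x-p_j|)^{e}$ — a unit — times the given unit $u_i(|x-\delta_i|^{(1,\alpha)})$, it is again a unit over $(a_j,a_{j+1})$.

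The step I expect to be the main obstacle is checking that $\Phi_{i,j}$ and $u_{i,j}$ really are $\M_i$-definable, i.e.\ that the values of $w_{k,j}$, which lie in $(\frac{1}{2},\frac{3}{2})$, can be raised in $\M_i$ to the irrational powers $\beta=\alpha^{(i)}_h$ and to $e=\ell_i(1,\alpha^{(i)})$, whereas $\M_i$ only supplies the restricted power $s\mapsto(1+s)^r$ on \emph{some} real neighbourhood of $0$. I would bridge this by covering the compact $[\frac{1}{2},\frac{3}{2}]$ by finitely many balls $w_l\cdot(1-\eta,1+\eta)$ with real $w_l>0$ and $\eta$ small, using $z^r=w_l^{\,r}\bigl(1+(z/w_l-1)\bigr)^r$ with $w_l^{\,r}\in M$ to define $z\mapsto z^r$ piecewise on $(\frac{1}{2},\frac{3}{2})$, and then treating $e=\ell_i^{(0)}+\sum_h\ell_i^{(h)}\alpha^{(i)}_h$ with $\ell_i^{(h)}\in\Lambda_i$ by composing the total power functions $z\mapsto z^{\lambda}$ ($\lambda\in\Lambda_i$) with these restricted powers. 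When $\M_i$ does not define restricted powers the hypothesis forces $\alpha^{(i)}=0$, every exponent lies in $\Lambda_i$, and this point evaporates. The remaining verifications — that $c_{i,j}\in M$ (using that $M$ is closed under real powers of its positive elements, as is already implicit in the statement), that the new exponent data stays in $(\Lambda_i^n)^*$, and that products of units are units — are routine.
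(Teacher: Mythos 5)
Your proposal is correct and follows essentially the same route as the paper: apply the Preparation Theorem (with $n=1$, in the pure real closed field, so the exponents are rational) to the functions $x\mapsto|x-\delta_i|$ to get a common pivot on each piece, then substitute the prepared expressions into the given monomial-times-unit forms, using restricted power functions (or $\alpha^{(i)}=0$) to keep the resulting units $\M_i$-definable. Your extra care about extending the restricted power $s\mapsto(1+s)^r$ to the whole range of the unit is a detail the paper leaves implicit, not a divergence in method.
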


\begin{proof}
The proof is elementary while inconvenient to state explicitly: we just apply the Preparation Theorem simultaneously to each function $x\mapsto |x-\delta_i|$ in the field structure $\overline{M}$ and replace all $|x-\delta_i|$ by their prepared expressions.

The Preparation Theorem gives (with $n=1$ and $f_i(x)= |x-\delta_i|$) a partition of $(a,b)$ into points and intervals
\[
 (a,b)=\bigcup_{j=0}^s (a_j,a_{j+1})\cup \bigcup_{j=1}^{s} \{a_j\}
\]
and for each $j\in\{1,\dots,s\}$, a pivot $p_j\in M$,  and for  $(i,j)\in\{1,\dots,m\}\times \{1,\dots,s\}$, a rational number $q_{i,j}\in\mathbb Q$, a semialgebraic function $u'_{i,j}:M\to M$ and a constant $c'_{i,j}\in M$, such that the following holds. For all $i,j\in\{1,\dots,m\}\times \{1,\dots,s\}$ and all $x\in (a_j,a_{j+1})$ we have
\[
|x-\delta_i|=c'_{i,j}|x-p_j|^{q_{i,j}}u_{i,j}'(|x-p_j|)
\]
and the function
\[ x\mapsto u_{i,j}'(|x-p_j|)
\]
is a unit on $(a_j,a_{j+1})$. 

Then for $x\in (a_j,a_{j+1})$ we have, 
\[
|x-\delta_i|^{\ell_i(1,\alpha^{(i)})} = (c'_{i,j})^{{\ell_i(1,\alpha^{(i)})}}|x-p_j|^{q_{i,j}{\ell_i(1,\alpha^{(i)})}}u_{i,j}'\left(|x-p_j|\right)^{{\ell_i(1,\alpha^{(i)})}},
\]
and
\[
 u_i\left(|x-\delta_i|^{(1,\alpha^{(i)})}\right) = u_i\left(\left( c'_{i,j}|x-p_j|^{q_{i,j}}u_{i,j}'\left(|x-p_j|\right)\right)^{(1,\alpha^{(i)})}\right).
\]

If  $\mathcal M_i$ defines restricted power functions, $u_{i,j}'(|x-p_j|)^{{\ell_i(1,\alpha^{(i)})}}$ is definable in 
$\mathcal M_i$, and otherwise, $\alpha^{(i)}=0$ so ${\ell_i(1,\alpha^{(i)})}\in \Lambda_i$, and $u_{i,j}'(|x-p_j|)^{{\ell_i(1,\alpha^{(i)})}}$ is again definable in 
$\mathcal M_i$.
Then since $q_{i,j}$ is rational, $u_i\left(\left( c'_{i,j}|x-p_j|^{q_{i,j}}u_{i,j}'\left(|x-p_j|\right)\right)^{(1,\alpha^{(i)})}\right)$ is an $\mathcal M_i$-definable function in terms of $|x-p_j|^{(1,\alpha^{(i)})}$.

Set $c_{i,j}=c_i(c'_{i,j})^{{\ell_i(1,\alpha^{(i)})}}\in M$, $\ell_{i,j}=q_{i,j}\ell_i\in(\Lambda^n)^*$ and define
\[
u_{i,j}(y)= u_{i,j}'(y_1)^{\ell_i(1,\alpha^{(i)})}u_i\left(z_{i,j,0}(y),z_{i,j,1}(y),\dots,z_{i,j,n}(y)
\right),
\]
where $y= (y_1,\ldots, y_n)$ and $z_{i,j,0}(y)=c'_{i,j}y_1^{q_{i,j}}u_{i,j}'(y)$, and for $k=1,\dots,n$, 
$$z_{i,j,k}(y)=\left(c'_{i,j}\right)^{\alpha^{(i)}_{k}}y_1^{q_{i,j}}u_{i,j}'(y)^{\alpha^{(i)}_{k}}.$$ 
Then $u_{i,j}$ is $\mathcal M_i$-definable and $x\mapsto u_{i,j}(|x-p|^{(1,\alpha)})$ is a unit over $(a_j,a_{j+1})$. Finally, with these definitions, for $x\in (a_j,a_{j+1})$, we have 
\[
\;c_i|x-\delta_i|^{\ell_i(1,\alpha^{(i)})}u_i\left(|x-\delta_i|^{(1,\alpha^{(i)})}\right)=c_{i,j}|x-p_j|^{\ell_{i,j}(1,\alpha^{(i)})}u_{i,j}\left(|x-p_j|^{(1,\alpha^{(i)})}\right)
\]
as we wanted.
\end{proof}

The previous lemma, and its proof, show how arguments involving cell decompositions, even in one variable, can easily produce rather heavy notation. In an attempt to alleviate this, we say that a property $P(x)$ holds piecewise on   {an interval $(a,b)$ in $M$} if there is a finite partition $(a,b)=\bigcup_{i=0}^n (a_i,a_{i+1})\cup \bigcup_{i=1}^n\{a_i\}$ of $(a,b)$ made of points and intervals, such that, for all $i=1,\dots, n$, and all $ x\in (a_i,a_{i+1})$,  we have $P(x)$. We omit $(a,b)$ from the notation in the case that $(a,b)=M$, so saying $P(x)$ holds piecewise means $P(x)$ holds piecewise on $M$. Whenever $P(x)$ is an existential formula, then of course the witnesses for the existentially quantified variables depend on the piece. For instance, the conclusion of the preceding lemma can now be stated as follows.

Then, piecewise on $(a,b)$, there exists $p\in M$, and for $ i=1,\dots,m$, there exist
$ d_{i}\in M,  k_{i}\in(\Lambda_i^n)^*,$ and $\M_i$-definable $v_{i}:M^{n}\to M,\;
\text{ such that }x \mapsto v_{i}\left(|x-p_j|^{(1,\alpha^{(i)})}\right)\;\text{is a unit, and }$
$$c_i|x-\delta_i|^{\ell_i(1,\alpha^{(i)})}u_i\left(|x-\delta_i|^{(1,\alpha^{(i)})}\right)=d_{i}|x-p|^{k_{i}(1,\alpha^{(i)})}v_{i}\left(|x-p|^{(1,\alpha^{(i)})}\right).
$$


\section{Terms are simple}\label{terms}

In this section, we prove Lemma \ref{lemma1}, { which gives an expression for the unary terms in $\widetilde{L}^{\mathbb R}$:} they are piecewise compositions of an $\widetilde{\mathcal M}$-definable function with some real powers. 
 {The assumptions we need in this section are as follows. 
We suppose that $\widetilde{\mathbb R}$ is a polynomially bounded o-minimal expansion of the ordered field of real numbers in which restricted power functions are definable, 
with language $\tilde{L}$ and complete theory $\widetilde{T}$, and that
$\widetilde{\mathcal M}$ is a non-archimedean model of $\widetilde{T}$
which admits an expansion $\widetilde{\mathcal M}^{\mathbb R}$ to a model of $\widetilde{T}^{\mathbb R}$.}

\begin{lemma}\label{lemma1} If $t:M\to M$ is a term in $\widetilde{L}^\R$ with parameters from $M$, then piecewise on $M$, there exist an $\widetilde{\M}$-definable function $F:M^n\to M$, an element $\delta$ of $M$ and a $n$-tuple $\alpha$ of real numbers such that 
\[
t(x)= F\left(\left| x-\delta\right|^\alpha\right).
\]

\end{lemma}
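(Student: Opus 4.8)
The plan is to argue by induction on the structure of the term $t$, exploiting the fact that $\widetilde{L}^{\mathbb R}$ is built from $\widetilde{L}$ together with the unary real power functions $x\mapsto x^r$ (for $r\in\mathbb R$, with the convention that this is $0$ on $(-\infty,0]$), and that every $\widetilde{L}$-definable function is $\widetilde{\mathcal M}$-definable. The base cases are variables and parameters, which are trivially of the desired form (take $F$ the appropriate projection or constant, $\alpha$ empty, $\delta$ anything). For the inductive step, a term is obtained either by applying an $\widetilde{L}$-function symbol to subterms, or by applying a power $(\cdot)^r$ to a subterm. In both cases the inductive hypothesis lets us assume, piecewise on $M$, that each immediate subterm $t_k$ has the form $t_k(x)=F_k(|x-\delta_k|^{\alpha^{(k)}})$ with $F_k$ an $\widetilde{\mathcal M}$-definable function. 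By refining the partition we may work on a single interval $(a,b)$ on which all these expressions are simultaneously valid.

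The key reduction is to make the pivots uniform: I would like to replace the various $\delta_k$ by a single $\delta$. This is exactly what Lemma \ref{lem:2} (and its ``piecewise'' reformulation) provides, once I have written each $t_k$ in a form to which that lemma applies. Here one must recall that $\widetilde{\mathcal M}$ defines all restricted power functions, so on $(a,b)$, writing $|x-\delta_k|$ in prepared form $c'|x-p|^{q}u'(|x-p|)$ with $q\in\mathbb Q$, the composition $F_k(|x-\delta_k|^{\alpha^{(k)}})$ becomes $\widetilde{\mathcal M}$-definable in terms of $|x-p|^{(1,\alpha^{(k)})}$ — this is the content of the computation inside the proof of Lemma \ref{lem:2} (the exponents $q$ are rational, and rational powers are everywhere definable; restricted powers absorb the $u'$-factors). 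Applying Lemma \ref{lem:2} simultaneously to all the $t_k$ (taking each $\mathcal M_i=\widetilde{\mathcal M}$), we obtain, piecewise on $(a,b)$, a single pivot $p$ and a single tuple of exponents $\beta:=(1,\alpha^{(1)},\dots,\alpha^{(j)})$ (the concatenation of the $(1,\alpha^{(k)})$'s) such that each $t_k(x)=G_k(|x-p|^{\beta})$ for $\widetilde{\mathcal M}$-definable $G_k$. Now the two inductive steps are immediate: if $t=h(t_1,\dots,t_j)$ with $h$ an $\widetilde{L}$-function symbol, then $t(x)=h(G_1,\dots,G_j)(|x-p|^\beta)$ and $h\circ(G_1,\dots,G_j)$ is $\widetilde{\mathcal M}$-definable; if $t=(t_1)^r$, then, after splitting $(a,b)$ according to the sign of $t_1$, on the piece where $t_1>0$ we have $t(x)=G_1(|x-p|^\beta)^r$, and — using once more that $\widetilde{\mathcal M}$ defines restricted powers after putting $G_1$ in monomial form — one rewrites $G_1^r$ as $c\,|x-p|^{\ell(\beta)}w(|x-p|^\beta)^r$ with $\ell$ a linear form and $w$ a unit, so that $t(x)=\widetilde{G}(|x-p|^{(\beta,\,\mathbf r)})$ for a suitable $\widetilde{\mathcal M}$-definable $\widetilde{G}$ and an enlarged exponent tuple (on the piece where $t_1\le 0$, $t\equiv 0$). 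Taking $F:=\widetilde G$, $\delta:=p$, and $\alpha$ the enlarged exponent tuple gives the conclusion on that piece; finitely many pieces cover $M$.

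The step that genuinely needs care — and which I regard as the main obstacle — is the power step $t=(t_1)^r$: one cannot just write $G_1(|x-p|^\beta)^r$ and be done, because $G_1$ is an arbitrary $\widetilde{\mathcal M}$-definable function, not a monomial, so $G_1^r$ need not be $\widetilde{\mathcal M}$-definable (irrational powers of general definable functions are not definable). The resolution is to first put $G_1$ into a monomial $\times$ unit form in the variable $|x-p|^\beta$ — this is where a monomialization result in the style of Lion--Rolin / Soufflet (anticipated as Lemma \ref{lem:3} of the paper) is invoked — so that $G_1 = c\,(|x-p|^\beta)^{\gamma}\,w$ with $w$ a unit; then $G_1^r = c^r\,(|x-p|^\beta)^{r\gamma}\,w^r$, the factor $(|x-p|^\beta)^{r\gamma}$ is a power of $|x-p|$ with a (possibly new, irrational) exponent that we add to $\alpha$, and $w^r$ is definable in $\widetilde{\mathcal M}$ because $w$ is a unit and $\widetilde{\mathcal M}$ defines restricted power functions. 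Everything else — the bookkeeping of finitely many intervals, the sign splittings, the concatenation of exponent tuples — is routine, handled cleanly by the ``piecewise'' language introduced just before the statement.
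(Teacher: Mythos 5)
Your proposal is correct and follows essentially the same route as the paper: induction on term complexity, with Lemma \ref{lem:2} (really the preparation of $|x-\delta_j|$ inside its proof) used to uniformize pivots in the composition step, and Lemma \ref{lem:3} used to monomialize before taking a real power, so that the irrational power falls only on a monomial in $|x-p|$ and on a unit (handled by restricted powers). The extra sign-splitting you add in the power step is a harmless refinement of what the paper does.
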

\begin{proof}
We prove this by induction on the complexity of terms. For the base case there is nothing to do.

First suppose that $t(x)= G(t_1(x),\ldots,t_m(x))$ where $t_1,\ldots,t_m $ are $\widetilde{L}^\R$ terms with parameters from $M$, of lower complexity than $t$, and $G$ is an $\widetilde{\M}$-definable function. Applying the induction hypothesis to $t_1,\ldots,t_m$ and taking a common refinement of the resulting partitions we obtain that, piecewise, for $j=1,\ldots, m$ we have $n_j$-ary $\widetilde{\M}$-definable functions $F_j$, elements $\delta_j $ of $M$ and $n_j$-tuples $\alpha^{(j)}$ of real numbers such that 
\[
t_j(x)= F_j\left(\left| x-\delta_j\right|^{\alpha^{(j)}}\right).
\] 
By Lemma \ref{lem:2} we can further partition, so that 
\[
\left| x-\delta_j\right|=c_j \left| x-\delta\right|^{q_j}u_j(|x-\delta|)
\]
 where $c_j \in M,q_j\in \Q$ and $u_j$ is semialgebraic and $x\mapsto u_j(|x-\delta|)$ is a unit over the interval $I$ under consideration. Define
\[
H_j(y_0, y_{j,1},\ldots,y_{j,n_j})=F_j\left( c_j^{\alpha^{(j)}_1} y_{j,1}^{q_j} u_j(y_0)^{\alpha^{(j)}_1},\ldots,  c_j^{\alpha^{(j)}_{n_j}} y_{j,n_j}^{q_j} u_j(y_0)^{\alpha^{(j)}_{n_j}}\right)
\]
We then have
\[
t_j(x)=H_j(|x-\delta|,|x-\delta|^{\alpha^{(j)}_1},\ldots,|x-\delta|^{\alpha^{(j)}_{n_j}}),
\]
for $x\in I$. Since restricted powers are definable in $\widetilde{\M}$ and the function $x\mapsto u_j(|x-\delta|)$ is a unit on $I$, the function $H_j$ is definable in  {$\widetilde{\M}$} on a domain containing 
\[
\left\{ \left( |x-\delta|,|x-\delta|^{\alpha^{(j)}_1},\ldots,|x-\delta|^{\alpha^{(j)}_{n_j}}\right):\; x \in I\right\}.
\]
So, setting $Y_j=(y_{j,1},\ldots,y_{j,n_j})$, the function 
\[
F\left(y_0,Y_1,\dots,Y_m)\right) = G\left( H_1 \left(y_0,Y_1\right),\ldots,H_m\left(y_0,Y_m\right)\right)
\]
is definable on a domain containing 
\[
\left\{ |x-\delta|^\beta : x \in I_i\right\}
\]
where 
\[
\beta= \left(1, \alpha^{(1)}_1,\ldots, \alpha^{(1)}_{n_1},\ldots,\alpha^{(m)}_1,\ldots,\alpha^{(m)}_{n_m}\right)
\]
and on $I_i$ we have 
\[
t(x)= F\left( |x-\delta|^\beta\right)
\]
as required. \\

Otherwise we have $t(x)= (t_1(x))^\beta$ for some $\widetilde{L}^\R$ term $t_1$ with parameters from $M$, of lower complexity than $t$ and some $\beta\in\mathbb R$. Applying the induction hypothesis to $t_1$ we get that, piecewise, 
\[
t_1(x)= F\left( |x-\delta|^\alpha\right)
\]
with $\delta \in M,\alpha \in \R^n$ and $F$ definable in $\widetilde{\M}$. Applying Lemma \ref{lem:3} we can assume that we have
\[
t_1(x)= c |x-\delta'|^{\ell(1,\alpha)}u \left( |x-\delta'|^\alpha\right)
\]
where $c,\delta' \in M$ with $\ell$ a $k(\widetilde{\M})$-linear form and $u$ an $\widetilde{\M}$ definable function such that $x\mapsto u \left( |x-\delta'|^\alpha\right)$ is a unit on the interval under consideration. 

Let 
\[
G(y_0,\ldots,y_k)= c^\beta y_0 u(y_1,\ldots,y_k)^\beta.
\]
Since $u$ is a unit, $G$ is $\widetilde{\mathcal M}$-definable. And we have
\[
G\left( \left| x-\delta'\right|^{\beta\cdot \ell (1,\alpha)}, \left| x-\delta'\right|^{\alpha_1},\ldots,  \left| x-\delta'\right|^{\alpha_k}\right)=t(x),
\]
as required.
\end{proof}

\section{Monomialization of simple terms}\label{monom}

This section is devoted to Lemma \ref{lem:3}, which was used in the preceding section.  {The lemma} gives a piecewise monomial expression for the composition of an $\widetilde{\mathcal M}$-definable function and power functions.  {As in the preceding section, our proof needs that $\widetilde{\mathcal M}$ is a non-archimedean model of $\widetilde{T}$, the complete theory of an o-minimal and polynomially bounded expansion $\widetilde{\mathbb R}$ of the field of real numbers in which restricted power functions are definable, and that $\widetilde{\mathcal M}$ admits an expansion $\widetilde{\mathcal M}^{\mathbb R}$ to a model of $\widetilde{T}^{\mathbb R}$.}

\begin{lemma}\label{lem:3}
Let $F:M^{k}\to M$ be a function definable in $\widetilde{\mathcal M}$, and suppose that $\alpha\in\mathbb R^{k-1}$, and $\delta \in M$. Set  $\Lambda=k(\widetilde{\mathcal M})$. Then, piecewise for $x\in M$, there exist 
$(c, p)\in M^2$, 
$\ell \in (\Lambda^{k})^*$,
and an $\widetilde{\mathcal M}$-definable function $u:M^{k}\to M$  such that 
\[
F\left(|x-\delta|^{(1,\alpha)}\right)=c|x-p|^{\ell(1,\alpha)}u\left(|x-p|^{(1,\alpha)}\right)
\]
and
\[
x\mapsto u\left(|x-p|^{(1,\alpha)}\right)\]
is a unit.
\end{lemma}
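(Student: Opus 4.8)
The plan is to induct on the cardinality of the set of exponents $\{\alpha_1,\dots,\alpha_{k-1}\}$ that do \emph{not} already lie in $\Lambda=k(\widetilde{\mathcal M})$, the key point being that whenever $\alpha_i\in\Lambda$ the power function $x\mapsto|x-p|^{\alpha_i}$ is already definable in $\widetilde{\mathcal M}$, so such coordinates can be absorbed into the definable function. If every $\alpha_i$ lies in $\Lambda$, then $g(x):=F(|x-\delta|^{(1,\alpha)})$ is itself $\widetilde{\mathcal M}$-definable, so we simply apply the Preparation Theorem to $g$ (as a function of one variable in the structure $\widetilde{\mathcal M}$): piecewise it writes as $c\,|x-p|^{\lambda}\,w(x)$ with $\lambda\in\Lambda$, $w$ $\widetilde{\mathcal M}$-definable and a unit on the piece, and we take $\ell$ to be the linear form $(z_0,\dots,z_{k-1})\mapsto\lambda z_0$ and $u$ the $\widetilde{\mathcal M}$-definable function $(y_0,\dots,y_{k-1})\mapsto w(\,\text{expressed via }y_0\,)$, recovering $w(x)$ from $|x-p|$; one must check, using Proposition \ref{prop:units}, that being a unit pointwise is genuinely being a unit over the piece. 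The same remark applies to reconstructing $|x-p|$ from $|x-p|^{(1,\alpha)}$ — here it is available as the first coordinate, so no power is needed.

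For the inductive step, pick some $\alpha_{i_0}\notin\Lambda$ and split the coordinates as $\alpha=(\alpha',\alpha_{i_0})$ with $\alpha'$ the remaining $k-2$ exponents; write $F(|x-\delta|^{(1,\alpha',\alpha_{i_0})})$ and freeze the last coordinate, viewing the remaining dependence as a $(k-1)$-variable $\widetilde{\mathcal M}$-definable family. The idea is to apply the Preparation Theorem in $\widetilde{\mathcal M}$ to the function $(x,t)\mapsto F(|x-\delta|^{(1,\alpha')},t)$ with the variable $t$ singled out and then substitute $t=|x-\delta|^{\alpha_{i_0}}$. Preparation gives, on each cell of a decomposition of the $(x,t)$-space, an expression $F(|x-\delta|^{(1,\alpha')},t)=|t-\theta(x)|^{\mu}\,a(x)\,U(x,t)$ with $\mu\in\Lambda$, $a$ and $\theta$ $\widetilde{\mathcal M}$-definable in $x$, and $U$ a unit on the cell. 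Substituting $t=|x-\delta|^{\alpha_{i_0}}$ and intersecting with the condition that $(x,|x-\delta|^{\alpha_{i_0}})$ lies in a given cell produces, piecewise in $x$, a factor $|\,|x-\delta|^{\alpha_{i_0}}-\theta(x)\,|^{\mu}$ together with an $\widetilde{\mathcal M}$-definable unit in the remaining $\alpha'$-powers. The crucial subcase is the cell where $t$ is pinned near $\theta(x)$: there we further use the Preparation Theorem on $\theta$ itself to monomialize $\theta(x)=c'|x-p|^{q}w'(x)$ with $q\in\Lambda$, and then $|x-\delta|^{\alpha_{i_0}}-\theta(x)=|x-p|^{\min(\ldots)}(\cdots)$ after factoring out the dominant monomial, which one identifies by comparing valuations — this is where the ordered-$\R$-vector-space structure on the value group is used, since $\alpha_{i_0}\cdot v(x-\delta)$ and $q\cdot v(x-p)+v(w')$ are compared as elements of $M^{\times}/U$. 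Once the $\alpha_{i_0}$-dependence has been converted into a monomial $|x-p|^{(\text{something involving }\alpha_{i_0})}$ times an $\widetilde{\mathcal M}$-definable unit in the $\alpha'$-powers (plus possibly $|x-p|^{\alpha_{i_0}}$ appearing only inside a unit, which is fine), we have strictly reduced the number of non-$\Lambda$ exponents appearing "essentially", and we apply the induction hypothesis to the resulting $(k-1)$-variable definable function with exponent tuple $\alpha'$; reassembling the two monomial factors and the two units (a product of units over a common piece is a unit, again by Proposition \ref{prop:units}) and collecting the exponents of $|x-p|$ into a single $\Lambda$-linear form $\ell$ in $(1,\alpha',\alpha_{i_0})$ finishes the step.

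The main obstacle is the substitution step: the Preparation Theorem is applied in the structure $\widetilde{\mathcal M}$, which does \emph{not} define $x\mapsto|x-\delta|^{\alpha_{i_0}}$, so the cell decomposition of the $(x,t)$-space that it produces need not interact cleanly with the graph $t=|x-\delta|^{\alpha_{i_0}}$ — pulling a cell back along this graph is where the argument is delicate, and one has to verify that the pullback is again a finite union of points and intervals (true, since the graph of $x\mapsto|x-\delta|^{\alpha_{i_0}}$ is $\widetilde{\mathcal M}^{\mathbb R}$-definable and $\widetilde{\mathcal M}^{\mathbb R}$ is o-minimal) and, more seriously, that on the "pinned" cell the two monomials $|x-\delta|^{\alpha_{i_0}}$ and $\theta(x)$ are comparable in valuation in a way that is constant on each resulting piece, so that the difference can itself be monomialized. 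Controlling this comparison — and making sure the exponent of $|x-p|$ that results is a $\Lambda$-linear combination of $1,\alpha',\alpha_{i_0}$, not something worse — is the heart of the proof, and is exactly the kind of valuation bookkeeping that the statement's careful attention to exponents is set up to handle.
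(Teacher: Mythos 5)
Your overall architecture is the same as the paper's: induct so as to peel off one exponent at a time, prepare $F$ in $\widetilde{\mathcal M}$ with respect to the distinguished variable, substitute $t=|x-\delta|^{\alpha_{i_0}}$, and analyse the resulting factor $\bigl|\,|x-\delta|^{\alpha_{i_0}}-\theta\bigr|^{\mu}$ by comparing valuations. (The paper inducts on $k$ and first reduces to the case where $1,\alpha_1,\dots,\alpha_{k-1}$ are $\Lambda$-linearly independent, which plays the same role as your counting of non-$\Lambda$ exponents.) Two points in your sketch do not go through as written, and the second is the heart of the matter.

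First, to monomialize $\theta\bigl(|x-\delta|^{(1,\alpha')}\bigr)$ you cannot apply the one-variable Preparation Theorem in $\widetilde{\mathcal M}$: this composite is not $\widetilde{\mathcal M}$-definable as a function of $x$, since the powers are not. You must invoke the inductive hypothesis of the lemma itself (applied to $\theta$ and to the coefficient $a$, with pivots then uniformized via Lemma \ref{lem:2}), and the resulting exponent is a $\Lambda$-linear form $\ell_\theta(1,\alpha')$, not an element $q\in\Lambda$. Second, and more seriously, in the case where $|x-\delta|^{\alpha_{i_0}}$ and $\theta$ have equal valuation, ``factoring out the dominant monomial'' fails: the quotient $1-\theta/|x-\delta|^{\alpha_{i_0}}$ need not be a unit, because the two terms can cancel to arbitrarily high valuation, so the difference is not a monomial times a unit. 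The correct resolution is different in kind. Writing out the equality of valuations via the two monomializations gives
$\alpha_{i_0}\bigl(v(c_\delta)+q\,v(|x-p|)\bigr)=v(c_\theta)+\ell_\theta(1,\alpha')\,v(|x-p|)$,
and the coefficient $q\alpha_{i_0}-\ell_\theta(1,\alpha')$ is nonzero precisely by the $\Lambda$-linear independence of $(1,\alpha)$; hence $v(|x-p|)$ is pinned to a single value on the piece, so by Proposition \ref{prop:units} $|x-p|$ is a constant times an $\overline{M}$-definable unit, and then every power of $|x-\delta|$ is a constant times an $\widetilde{\mathcal M}$-definable unit, collapsing the whole expression to a $\widetilde{\mathcal M}$-definable function of $|x-p|$ with $\ell=0$. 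Without this step your induction has no way to terminate in the comparable case. In the two dominated cases your argument is fine and matches the paper's.
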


\begin{proof}

We proceed by induction on $k$. In the base case $k=1$, in which we have no exponents $\alpha$, the function $F(|x- {\delta}|)$ is definable in $\widetilde{\mathcal M}$ and applying the Preparation Theorem gives the result. So we suppose that the result holds for $k$ and prove it for $k+1$. 

We first note that we can assume that $1,\alpha_1,\ldots,\alpha_k$ are linearly independent over $\Lambda$. If not, we can can assume that $1,\alpha_1,\ldots,\alpha_{k'}$ form a basis for the vector space $\text{Vect}_{\Lambda}(1,\alpha)$, for some $k'<k$. Then let $\lambda_{i,j}\in\Lambda$, for $(i,j)\in\{k'+1,\dots,k\}\times\{0,\dots,k'\}$ be such that
$$\alpha_i=\lambda_{i,0}+\sum_{j=1}^{k'} \lambda_{i,j}\alpha_{j},$$
and put $\displaystyle m_{i}(y_0,\dots,y_{k'})=\prod_{j=0}^{k'} y_j^{\lambda_{i,j}}$. We have 
\[
F\left(|x-\delta|^{(1,\alpha)}\right)=H\left(|x-\delta|^{(1,\alpha_1,\ldots,\alpha_{k'})}\right)
\]
with $H(y_0,\dots,y_{k'})=F(y_0,\dots,y_{k'},m_{k'+1}(y_0,\dots,y_{k'}),\dots,m_{k}(y_0,\dots,y_{k'}))$
and the result follows from the inductive assumption. So we suppose $(1,\alpha)$ are linearly independent over $\Lambda$.

We apply the Preparation Theorem to the function $F$ with respect to its last variable. This gives a cell decomposition $\mathcal C$ of $M^{k+1}$ into $\widetilde{\mathcal M}$-definable cells such that $F$ is prepared over each cell of $\mathcal C$.
Since the function $x\mapsto |x-\delta|^{(1,\alpha)}$ and $F$ are both definable in $\widetilde{\mathcal M}^{\mathbb R}$, we get by o-minimality that piecewise for $x\in M$, there is a unique cell $C\in\mathcal C$ that contains $|x-\delta|^{(1,\alpha)}$. We fix a piece $I\subset M$. Denote by $C'$ the projection of $C$ onto the first $k$ coordinates.

The previous preparation gives $\theta: C'\to M$, $a:C'\to M$ and $u:C\to M$, all definable in $\widetilde{\mathcal M}$, and a power $\lambda\in \Lambda$ such that for all  $ (y,y_{k+1})\in C$ we have
\[
F(y,y_{k+1})=|y_{k+1}-\theta(y)|^{\lambda}a(y)u(y,y_{k+1})
\]
and $u$ is a unit.
We set $\alpha'=(\alpha_1,\dots,\alpha_{k-1})$ and apply the inductive hypothesis to both $a\left(|x-\delta|^{(1,\alpha')}\right)$ and $\theta\left(|x-\delta|^{(1,\alpha')}\right)$ and then uniformize the pivots of those monomializations and of $|x-{\delta}|$ using Lemma \ref{lem:2}. So piecewise on $I$, we have the following

\begin{equation}\label{eq:3:1}\begin{array}{ll}
|x-\delta| & =c_{\delta}|x-p|^q u_{\delta}(|x-p|), \\
a\left(|x-\delta|^{(1,\alpha')}\right)  & = c_a|x-p|^{\ell_a(1,\alpha')}u_a\left(|x-p|^{(1,\alpha')}\right), \\
\theta\left(|x-\delta|^{(1,\alpha')}\right) & = c_{\theta}|x-p|^{\ell_{\theta}(1,\alpha')}u_{\theta}\left(|x-p|^{(1,\alpha')}\right),
\end{array}\end{equation}
with $p, c_{\delta}, c_a, c_{\theta}$ in $M$, 
$q\in\mathbb Q$, 
$\Lambda$-linear forms $\ell_a,\ell_{\theta}$ , an
 $\overline{M}$-definable function $u_{\delta}$, and 
 $\widetilde{\mathcal M}$-definable functions $u_a, u_{\theta}$, with
 $x\mapsto u_{\delta}(|x-p|), x\mapsto u_a(|x-p|^{(1,\alpha')})$ and $ x\mapsto u_{\theta}(|x-p|^{(1,\alpha')})$ units. 
Shrinking $I$ we may assume that it is one of the resulting pieces. 

By o-minimality, piecewise on $I$ again, exactly one of the following alternatives holds:
\begin{enumerate}[(a)]
\item $|x-\delta|^{\alpha_k} > 2 \left|\theta\left(|x-\delta|^{(1,\alpha')}\right)\right|$;
\item  $|x-\delta|^{\alpha_k} < \frac{1}{2} \left|\theta\left(|x-\delta|^{(1,\alpha')}\right)\right|$;
\item $\frac{1}{2} \left|\theta\left(|x-\delta|^{(1,\alpha')}\right)\right|<|x-\delta|^{\alpha_k} <2 \left|\theta\left(|x-\delta|^{(1,\alpha')}\right)\right|.$
\end{enumerate}
Shrinking $I$ again, we may assume that it is one of the resulting pieces, so that exactly one of these cases holds on $I$. We consider the three possibilities in turn. 

First suppose that (a) holds on $I$. We have
\begin{equation}\label{eq:3:2}\begin{array}{rl}
F\left(|x-\delta|^{(1,\alpha)}\right) &= \left||x-\delta|^{\alpha_k}-\theta\left(|x-\delta|^{(1,\alpha')}\right)\right|^{\lambda}a\left(|x-\delta|^{(1,\alpha')}\right)u\left(|x-\delta|^{(1,\alpha)}\right)\\
& =  |x-\delta|^{\lambda \alpha_k}\left|1- \frac{\theta\left(|x-\delta|^{(1,\alpha')}\right)}{|x-\delta|^{\alpha_k} }\right|^{\lambda}a\left(|x-\delta|^{(1,\alpha')}\right)u\left(|x-\delta|^{(1,\alpha)}\right).
\end{array}
\end{equation}
Here, because (a) holds on $I$, the term 
$$
1- \frac{\theta\left(|x-\delta|^{(1,\alpha')}\right)}{|x-\delta|^{\alpha_k}}
$$
is a unit on $I$, and thanks to (\ref{eq:3:1}) can be written as an $\widetilde{\mathcal M}$-definable function of $|x-p|^{(1,\alpha)}$. Then, replacing $a\left(|x-\delta|^{(1,\alpha')}\right)$ and $|x-\delta|$ in (\ref{eq:3:2}) by their expressions given in $(\ref{eq:3:1})$, and aggregating constants, units, and  powers of $|x-p|$, we get 
\[
F\left(|x-\delta|^{(1,\alpha)}\right) = c |x-p|^{q \lambda \alpha_k +\ell_a(1,\alpha')}u'\left(|x-p|^{(1,\alpha)}\right)
\]
with $c\in M$,  and $u'$ an  $\widetilde{\mathcal M}$-definable function such that $u'\left(|x-p|^{(1,\alpha)}\right)$ is a unit over $I$.
Setting $\ell(1,\alpha)= q \lambda \alpha_k +\ell_a(1,\alpha')$ gives the desired shape for $F\left(|x-\delta|^{(1,\alpha)}\right)$ on $I$.

The case (b) is very similar, factoring out $\theta\left(|x-\delta|^{(1,\alpha')}\right)$ instead of $|x-\delta|^{\alpha_k}$ in (\ref{eq:3:2}). We leave the calculation to the reader. 

It remains to consider the case that (c) holds on $I$. Notice that
(c) implies that the valuations of $\theta\left(|x-\delta|^{(1,\alpha')}\right)$ and of $|x-\delta|^{\alpha_k}$ are equal on $I$. From the expressions of these two terms given in (\ref{eq:3:1}) we then deduce
\[
\alpha_k v(c_{\delta}) + q\alpha_k v(|x-p|)= v(c_{\theta})+\ell_{\theta}(1,\alpha')v(|x-p|).
\]
We can rewrite this as 
\[
v(|x-p|)=\frac{v(c_{\theta})-\alpha_kv(c_{\delta})}{q\alpha_k-\ell_{\theta}(1,\alpha')}.
\]
The fact that $1,\alpha_1,\ldots,\alpha_{k}$ are linearly independent over $\Lambda$ implies that the denominator here does not vanish (or $q=0$ and $\ell_{\theta}=0$, but $q=0$ leads to the same ``constant times unit'' form for $|x-\delta|^{(1,\alpha)}$ we end up with below). We define an element $c$ of $M$ by
\[
c:=\left(\frac{c_{\theta}}{c_{\delta}^{\alpha_k}} \right)^{\frac{1}{q\alpha_k-\ell_{\theta}(1,\alpha')}},
\]
so that $v(|x-p|)=v(c)$ on $ I$. Thanks to Proposition \ref{prop:units}, this implies that there is an $\overline{M}$-definable unit $u_p$ such that $|x-p|=cu_p(|x-p|)$.
Now 
$$\begin{array}{rcl}
|x-\delta|^{(1,\alpha)}&=&\left(c_{\delta}|x-p|^q u_{\delta}\left(|x-p|\right)\right)^{(1,\alpha)}\\
&=&\left(c_{\delta}c^qu_p\left(|x-p|\right)^q u_{\delta}\left(|x-p|\right)\right)^{(1,\alpha)}.
\end{array}
$$
Since $\widetilde{\mathcal M}$ defines restricted power functions, 
the powers-of-units $\left(u_p(|x-p|)^q\right)^{(1,\alpha)}$ and $u_{\delta}(|x-p|)^{(1,\alpha)}$ 
are $\widetilde{\mathcal M}$-definable maps on  $I$. 
In particular, incorporating the constants and units in $F$, 
we get
\[
F\left(|x-\delta|^{(1,\alpha)}\right)=G(|x-p|)
\]
for $x \in I$, with some $\widetilde{\mathcal M}$-definable function $G$, which completes the proof. 
\end{proof}

\section{Avoiding extra powers}\label{avoid}

In this section we prove Lemma \ref{lem:4}. It is the core of the proof of Theorem \ref{main}, where extra powers are avoided.
 {We state the assumptions we need: 
$\widetilde{\mathcal M}$ is a non-archimedean model of $\widetilde{T}$, the complete theory of an o-minimal and polynomially bounded expansion $\widetilde{\mathbb R}$ of the field of real numbers that defines restricted power functions, and 
$\widetilde{\mathcal M}$ admits an expansion $\widetilde{\mathcal M}^{\mathbb R}$ to a model of $\widetilde{T}^{\mathbb R}$. We also fix some 
$\widehat{\mathcal M}$ a polynomially bounded expansion of $\widetilde{\mathcal M}$ whose theory admits an archimedean model. Our theorem only needs the particular case where $\widehat{\mathcal M}$ is a reduct of $\widetilde{\mathcal M}^{\mathbb R}$, but this hypothesis is not necessary for the proof of the lemma}.

\begin{lemma}\label{lem:4}
Let $F: M^n\to M$ be definable in $\widetilde{\mathcal M}$, $\delta\in M$,
$\alpha\in\mathbb R^n$ and $(a,b)$ be an interval in $M$.
Suppose that the function $x\mapsto F(|x-\delta|^{\alpha})$ is definable in $\widehat{\mathcal M}$ on $(a,b)$.
Then, piecewise on $(a,b)$, there exist $m\in \mathbb N, \eta \in M, \beta \in k(\widehat{M})^m$ and a function  {$G:M^m\to M$} definable in $\widetilde{ \M} $ such that 
\[ F(|x-\delta|^{\alpha}) = G(|x-\eta|^{\beta}).\]
\end{lemma}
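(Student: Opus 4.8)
The plan is to combine the monomialization lemma (Lemma \ref{lem:3}) with a valuation-theoretic argument exploiting the hypothesis that $F(|x-\delta|^\alpha)$ is definable in $\widehat{\mathcal M}$, whose field of exponents is $k(\widehat M)$. First I would apply Lemma \ref{lem:3} to $F$ with the data $\alpha$ and $\delta$: piecewise on $(a,b)$ we obtain $(c,p)\in M^2$, a $k(\widetilde{\mathcal M})$-linear form $\ell\in(\Lambda^n)^*$ (writing $\Lambda=k(\widetilde{\mathcal M})$) and an $\widetilde{\mathcal M}$-definable $u$ with
\[
F\left(|x-\delta|^{(1,\alpha')}\right)=c\,|x-p|^{\ell(1,\alpha')}\,u\left(|x-p|^{(1,\alpha')}\right)
\]
on each piece, where $x\mapsto u(|x-p|^{(1,\alpha')})$ is a unit and $\alpha'$ is the remaining coordinates of $\alpha$ after extracting the leading $1$. (One may need to first normalize $|x-\delta|^\alpha$ so that $1$ appears among the exponents, which is harmless after multiplying $F$ by a power of its first variable, or by absorbing this into the bookkeeping.) Fix such a piece $I$. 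The term $u(|x-p|^{(1,\alpha')})$ is already of the desired shape $G_0(|x-\eta|^\beta)$ with $\eta=p$ and exponents in $\Lambda\subseteq k(\widehat M)$, so the only obstruction to concluding is the monomial factor $|x-p|^{\ell(1,\alpha')}$, whose exponent $r:=\ell(1,\alpha')$ is a priori merely a real number.

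The heart of the argument is therefore to show $r\in k(\widehat M)$ on each piece, using that the left-hand side $F(|x-\delta|^\alpha)$ is $\widehat{\mathcal M}$-definable on $I$ while $c$ and $u(|x-p|^{(1,\alpha')})$ contribute no non-$k(\widehat M)$ exponent. The strategy is to consider the $\widehat{\mathcal M}$-definable function $h(x):=F(|x-\delta|^\alpha)$ and apply the Preparation Theorem in the structure $\widehat{\mathcal M}$ to $h$ (as a function of $x$) over the interval $I$: this produces, after refining $I$, a pivot $p'\in M$, an exponent $\mu\in k(\widehat M)$, and a unit $w$ over $I$ (definable in $\widehat{\mathcal M}$) with $h(x)=c'\,|x-p'|^\mu\,w(x)$ on $I$. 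Comparing this with the expression coming from Lemma \ref{lem:3}, one deduces by taking valuations — and using that units have valuation $0$ by Proposition \ref{prop:units} — that $\mu\, v(|x-p'|)=r\,v(|x-p|)$ on $I$, together with an analogous relation after differentiating or comparing the variation of the two monomials. Since $v(|x-p|)$ and $v(|x-p'|)$ are each either constant on $I$ or vary linearly as $x$ ranges over $I$, one gets either that $v(|x-p|)$ is constant on $I$ — in which case $|x-p|=c''\,u''(|x-p|)$ for an $\overline{M}$-definable unit $u''$ by Proposition \ref{prop:units}, and then the whole monomial $|x-p|^r$ collapses into a constant times a unit, so $h$ has the required form with no extra exponents — or that $v(|x-p|)$ genuinely varies, forcing $p'$ to generate the same valuation behaviour as $p$ and hence (after comparing the two prepared forms, both valid on all of $I$) forcing $r=\mu\in k(\widehat M)$.

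To make the comparison rigorous I would argue as follows: after uniformizing the two pivots $p$ and $p'$ via Lemma \ref{lem:2} (applied in the field structure $\overline M$ to $x\mapsto|x-p|$ and $x\mapsto|x-p'|$), we may assume $p=p'$ on $I$; then dividing the two expressions for $h(x)$ gives, on $I$,
\[
|x-p|^{r-\mu}=\frac{c'\,w(x)}{c\,u(|x-p|^{(1,\alpha')})},
\]
and the right-hand side is a unit over $I$ (quotient of units), so $|x-p|^{r-\mu}$ is a unit over $I$. If $v(|x-p|)$ is non-constant on $I$ this forces $r=\mu$, hence $r\in k(\widehat M)$; if $v(|x-p|)$ is constant on $I$ then $|x-p|$ itself equals a constant times an $\overline M$-definable unit (Proposition \ref{prop:units} again), and then $|x-p|^r=c_1\,u_1(|x-p|)$ with $u_1$ a power of a unit, hence $\widetilde{\mathcal M}$-definable since $\widetilde{\mathcal M}$ defines restricted powers. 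In the first case set $\eta=p$, $\beta=(r,1,\alpha')$ (or its relevant coordinates) which lies in $k(\widehat M)^m$, and absorb $c$ into $G$; in the second case the monomial disappears into $G$ and we take $\eta=p$, $\beta=(1,\alpha')\in\Lambda^m\subseteq k(\widehat M)^m$. In both cases $F(|x-\delta|^\alpha)=G(|x-\eta|^\beta)$ on $I$ for an $\widetilde{\mathcal M}$-definable $G$, which is exactly the conclusion.

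The main obstacle I anticipate is the valuation comparison step: one must be careful that the Preparation Theorem applied in $\widehat{\mathcal M}$ yields exponents in $k(\widehat M)$ (this is part of its statement, since $k(\widehat{\mathcal M})$ is the field of exponents) and that, after uniformizing pivots, the resulting ``extra'' semialgebraic units introduced by Lemma \ref{lem:2} do not secretly carry new exponents — they do not, since they are semialgebraic, hence defined over $\overline M$ with rational exponents, and $\mathbb Q\subseteq k(\widehat M)$. The other delicate point is handling the degenerate sub-cases ($q=0$ in Lemma \ref{lem:3}, or $v(|x-p|)$ constant), but in each of those the monomial factor trivializes and the conclusion is immediate, so these are bookkeeping rather than genuine difficulties.
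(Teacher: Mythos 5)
There is a genuine gap, and it lies at the very first step of your reduction. After applying Lemma \ref{lem:3} you write $F(|x-\delta|^{(1,\alpha')})=c\,|x-p|^{\ell(1,\alpha')}\,u\bigl(|x-p|^{(1,\alpha')}\bigr)$ and claim that the factor $u\bigl(|x-p|^{(1,\alpha')}\bigr)$ is ``already of the desired shape $G_0(|x-\eta|^\beta)$ with exponents in $\Lambda\subseteq k(\widehat M)$.'' This is false: the exponents occurring in the arguments of $u$ are $1,\alpha_1,\dots,\alpha_{n-1}$, which are arbitrary real numbers and in general do not lie in $k(\widehat{\mathcal M})$. The same error reappears in your conclusion, where you set $\beta=(r,1,\alpha')$ and assert it lies in $k(\widehat M)^m$. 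Consequently your entire strategy addresses only the single ``leading'' exponent $r=\ell(1,\alpha')$ of the monomial factor, while the real difficulty of the lemma is to eliminate the exponents $\alpha'$ hidden inside the unit. Your valuation comparison (showing $r=\mu\in k(\widehat M)$ when $v(|x-p|)$ is non-constant) is sound as far as it goes, and your treatment of the constant-valuation case (where $|x-p|$ collapses to a constant times an $\overline M$-definable unit and all powers of it become $\widetilde{\mathcal M}$-definable) matches a subcase of the actual argument; but in the non-constant case you are left with $G\bigl(|x-p|^{(\mu,1,\alpha')}\bigr)$, which does not satisfy the conclusion.

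The missing idea is the mechanism by which the paper removes one bad exponent at a time. One inducts on $\dim\mathrm{Vect}_{k(\widehat{\mathcal M})}(1,\alpha)$, writes $g(|x-\delta|)=H\bigl(|x-\delta|^{(1,\alpha')}\bigr)$ with $(1,\alpha')$ a basis of that span, and observes that the set $S=\{(x,y):g(x)=H(x,y)\}$ is definable in $\widehat{\mathcal M}$ because both $g$ and $H$ are. A cell decomposition of $S$ in $\widehat{\mathcal M}$ then traps the curve $x\mapsto|x-\delta|^{(1,\alpha')}$ in a single cell $C\subseteq S$: if $C$ is a graph $y_k=h(x,y')$, the relation $|x-\delta|^{\alpha_k}=h(\cdots)$ together with $\alpha_k\notin\mathrm{Vect}_{k(\widehat{\mathcal M})}(1,\alpha'')$ forces $v(|x-p|)$ to be constant and everything collapses; if $C$ is a strip $h_1<y_k<h_2$, one replaces $|x-\delta|^{\alpha_k}$ by a ``middle'' monomial $a|x-p|^{\ell(1,\alpha'')}$ with $\ell$ a $k(\widehat{\mathcal M})$-linear form, which stays inside $C\subseteq S$ and hence still computes $g$, strictly lowering the dimension of the span so that the induction closes. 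Without some substitute for this step, your argument cannot reach the stated conclusion.
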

\begin{proof}
Set $\Lambda =  k(\widehat{\mathcal M})$ and let $g$ be given by $g(|x-\delta|) =F(|x-\delta|^{\alpha})$ for $x\in (a,b)$.
We proceed by induction on $\text{Dim}(\text{Vect}_{\Lambda}(1,\alpha))$. If it is $1$, the powers in the expression $F(|x-\delta|^{\alpha})$ all belong to $\Lambda$ and the conclusion follows.

We suppose the lemma holds when $\text{Dim}(\text{Vect}_{\Lambda}(1,\alpha))=k$ and prove it for $k+1$.
Reordering the indices we can assume that $(1,\alpha')$ is a basis of $\text{Vect}_{\Lambda}(1,\alpha)$, where $\alpha'=(\alpha_1,\ldots,\alpha_{k})$. 
 For $(i,j)\in\{k+1,\dots,n\}\times\{0,\dots,k\}$ we let $\lambda_{i,j}$ be the unique elements of $\Lambda$ such that
$$\alpha_i=\lambda_{i,0}+\sum_{j=1}^{k} \lambda_{i,j}\alpha_{j}.$$
For $z=(z_0,\dots,z_k)$ and $i=k+1,\dots,n$, we define the monomial $m_{i}(z)=\prod_{j=0}^k z_j^{\lambda_{i,j}}$, and set $H(z)=F(z_1,\dots,z_k,m_{k+1}(z),\dots,m_{n}(z))$. We then have $g(|x-\delta|)=H(|x-\delta|^{(1,\alpha')})$, for $x\in(a,b)$. 

Since $H$ and $g$ are definable in $\widehat{\mathcal M}$, the set $S$ given by 
$$S:=\{(x,y)\in M\times M^k: g(x)=H(x,y)\}$$
is definable in $\widehat{\mathcal M}$. 
We apply cell decomposition to $S$, and get a  partition
$\mathcal C$ of $M\times M^k$ into cells definable in  $\widehat{\mathcal M}$ 
such that each cell is either included in $S$, or has empty intersection with $S$. 
For each cell $C\in\mathcal C$,  
since $\phi: x\mapsto |x-\delta|^{(1,\alpha')}$ is definable in $\widehat{\mathcal{M}}^{\mathbb R}$ the preimage $\phi^{-1}(C)$ is a finite union of points and intervals. 
Then, piecewise on $(a,b)$, the image of $\phi$ is contained in a unique cell $C$ of $\mathcal C$. We fix a piece $I$ and consider two cases according to the shape of $C$ with respect to the last variable $y_k$. Note that since the image of $\phi$ is included in $S$, and the cell decomposition $\mathcal C$ is compatible with $S$, the whole cell $C$ is included in $S$. 

First suppose that $C$ is a graph over a cell $D\subset M\times M^{k-1}$ so that
\[
C=\{(x,y',y_k)\in M\times M^{k-1}\times M:(x,y')\in D,\; y_k=h(x,y')\},
\]
with $h$ definable in $\widehat{\mathcal M}$. 
Then
\begin{equation}\label{lemma4eq1}
|x-\delta|^{\alpha_k}=h\left(|x-\delta|^{(1,\alpha'')}\right)
\end{equation}
for $x$ in $I$,  with $\alpha''=(\alpha_1,\dots,\alpha_{k-1})$. We monomialize $h\left(|x-\delta|^{(1,\alpha'')}\right)$ using Lemma \ref{lem:3}, so that piecewise on $I$, we have  {
\[
h\left(|x-\delta|^{\left(1,\alpha''\right)}\right)=c|x-p|^{\ell \left(1,\alpha''\right)}u\left(|x-p|^{(1,\alpha'')}\right)
\]}
for some constants $c,p$ in $M$, some $\Lambda$-linear form $\ell\in\left(\Lambda^{k}\right)^*$, and some unit $u$ definable in $\widehat{\mathcal M}$. We then uniformize the pivots of this monomialization and of $|x-\delta|$ thanks to Lemma \ref{lem:2} (but keep the notation for the monomialization), so piecewise on $I$, we have
\begin{equation}\label{lemma4eq2}
h\left(|x-\delta|^{(1,\alpha'')}\right)=c|x-p|^{\ell(1,\alpha'')}u\left(|x-p|^{(1,\alpha'')}\right)
\end{equation}
and
\begin{equation}\label{lemma4eq3}
 |x-\delta|=c'|x-p|^qu'(|x-p|),
\end{equation}
with $p,c,c'$ constants, $u$ unit definable in $\widehat{\mathcal M}$, $u'$ unit definable in $\overline{M}$, $\ell\in(\Lambda^{k})^*$ and $q\in\mathbb Q$.
We shrink $I$ to be one piece.

From equations \eqref{lemma4eq1},\eqref{lemma4eq2} and \eqref{lemma4eq3} together with the fact that $\alpha_k\notin \text{Vect}_{\Lambda}(1,\alpha')$ and so $q\alpha_k-\ell(1,\alpha'')\neq 0$, (or $q=0$ and $\ell=0$, but $q=0$ leads to the same ``constant times unit'' form for $|x-\delta|$ we end up with below), we deduce that 
\[
v\left(|x-p|\right)= \frac{v(c)-v(c'^{\alpha_k})}{q\alpha_k-\ell(1,\alpha'')}
\]
for all $x \in I$. In particular, setting $c''=\left(\frac{c}{c'^{\alpha_k}}\right)^{\frac{1}{q\alpha_k-\ell'(1,\alpha'')}}$, 
we have 
\[
v\left(|x-p|\right)=v(c'')
\] 
so, from Proposition \ref{prop:units}, there exists a function $u''$, definable in $\overline{M}$, such that 
\[
|x-p|=c''u''(|x-p|)
\] and such that 
\[
x\mapsto u''(|x-p|)
\]
 is a unit on $I$. We can conclude this case. We have
\begin{eqnarray*}
 |x-\delta|&=&c'(c''u''(|x-p|))^qu'(|x-p|) \\
 & =&c'''u'''(|x-p|)
\end{eqnarray*}
 with $c'''\in M$ and $x\mapsto u'''(|x-p|)$ a unit on $I$ definable in $\overline{ M}$. Since any real power of an $\overline M$-definable unit is definable in $\widetilde{\mathcal M}$, we get $g(|x-\delta|) = G(|x-p|)$, with $G$ definable in $\widetilde{\mathcal M}$, by simply replacing $|x-\delta|$ by $c'''u'''(|x-p|)$  {on the right of} the equality $g(|x-\delta|)=F(|x-\delta|^{\alpha}$).

We now treat the second case and suppose that $C$ is a strip between two graphs over a cell $D\subset M\times M^{k-1}$. So 
\[
C=\{(x,y',y_k)\in M\times M^{k-1}\times M:(x,y')\in D,\; h_1(x,y')<y_k<h_2(x,y')\}
\]
with $h_1<h_2$ definable in $\widehat{\mathcal M}$ (or with $h_1$ or $h_2$ equal to $-\infty$ or $+\infty$, respectively, with the obvious conventions). Recall that on $I$, the image of $\phi: x\mapsto |x-\delta|^{(1,\alpha')}$ is included in $C$, so 
we have
\begin{equation}\label{eq:4:1}
h_1\left(|x-\delta|^{(1,\alpha'')}\right)<|x-\delta|^{\alpha_k}< h_2\left(|x-\delta|^{(1,\alpha''})\right),
\end{equation}
with $\alpha''=(\alpha_1,\dots,\alpha_{k-1})$.
  We monomialize $h_1\left(|x-\delta|^{(1,\alpha'')}\right)$ and $h_2\left(|x-\delta|^{(1,\alpha'')}\right)$ with Lemma \ref{lem:3}, and uniformize the pivots of these monomializations and of $|x-\delta|$ with Lemma \ref{lem:2}. So, piecewise on $I$, we get:
\begin{eqnarray*}
h_1\left(|x-\delta|^{(1,\alpha'')}\right)&=&c_1|x-p|^{\ell_1(1,\alpha'')}u_1\left(|x-p|^{(1,\alpha'')}\right),\\
h_2\left(|x-\delta|^{(1,\alpha'')}\right)&=&c_2|x-p|^{\ell_2(1,\alpha'')}u_2\left(|x-p|^{(1,\alpha'')}\right),\\
 |x-\delta|&=&c|x-p|^qu(|x-p|),
 \end{eqnarray*}
with $c_1, c_2, c, p$ constants in $M$, a unit $u$ definable in $\overline{M}$, units $u_1, u_2$ definable in $\widehat{\mathcal M}$, forms $\ell_1,\ell_2$ in  $(\Lambda^{k})^*$ and $q\in\mathbb Q$. We now write $I$ for one of the resulting intervals. 

Note that $|x-\delta|^{\alpha_k}$ is positive. So we may, after possibly replacing $h_1$ by $0$, and constants and units by their negations, assume that $c_1,c_2,c$ are non-negative, that 
$u_1\left(|x-p|^{(1,\alpha'')}\right), u_2\left(|x-p|^{(1,\alpha'')}\right), u(|x-p|)$ are non-negative on $I$, and that the system of inequalities (\ref{eq:4:1}) holds. Taking the valuation of these inequalities and using the monomializations above gives
\begin{equation}\label{eq:4:2}
v(c_1)+\ell_1(1,\alpha'')v(|x-p|)\ge \alpha_k v(|x-p|) \ge v(c_2)+\ell_2(1,\alpha'')v(|x-p|)
\end{equation}
for $x \in I$.

We first consider the case that $v(|x-p|)$ is constant as $x$ ranges over $I$. So suppose $c_3$ in $M\setminus \{ 0\}$ is such that $v(|x-p|) = v(c_3)$ for all $x $ in $I$. By Proposition \ref{prop:units}, $|x-p|=c_3 u_3(|x-p|)$ for some $u_3$ definable in $\overline{ M}$ such that $u_3(|x-p|)$ is a unit.  Then  $|x-\delta|^{\alpha}= (cc_3^q u_3(|x-p|)^qu(|x-p|))^{\alpha} $ is a function of $|x-p|$ definable in $\widetilde{\mathcal M}$, and replacing it in $F\left(|x-\delta|^{\alpha}\right)$ gives an expression of the form $F\left(|x-\delta|^{\alpha}\right)=G(|x-p|)$ with $G$ definable in $\widetilde{\mathcal M}$, which finishes the proof in this case.

So we can assume that $v(|x-p|)$ is not constant for $x\in I$. Since the image of $I$ under $|x-p|$ is an interval and the image of an interval under the valuation $v$ is an interval in the value group, the image of $I$ under $x\mapsto v(|x-p|)$ is an interval, with endpoints $v_1 < v_2$ in the value group ($v_1$ and $v_2$ are distinct since $v(|x-p|)$ is not constant; $v_2$ might be $\infty$). 
In (\ref{eq:4:2}), since the slope $\alpha_k$ of the middle affine map $v(|x-p|)\mapsto \alpha_k v(|x-p|)$ does not belong to $\text{Vect}_{\Lambda}(1,\alpha'')$ while the slopes $\ell_1(1,\alpha'')$ and $\ell_2(1,\alpha'')$ of the outer maps do, the inequalities (\ref{eq:4:2}) are necessarily strict over the preimage $v^{-1}((v_1,v_2))$ of the open interval $(v_1,v_2)$ by $v$.
So the mean value of the bounds also satisfies the strict inequalities, that is, for all $x\in I$, if $|x-p|\in v^{-1}((v_1,v_2))$, we have
$$
v(c_1)+\ell_1(1,\alpha'')v(|x-p|) > v(a)+\ell(1,\alpha'')v(|x-p|) > v(c_2)+\ell_2(1,\alpha'')v(|x-p|),
$$
with $a=(c_1c_2)^{\frac{1}{2}}$ and $\ell=\frac{\ell_1+\ell_2}{2}$. (For the degenerate cases, 
if $h_1=0$ and $h_2\neq +\infty$, we choose $a$ with $v(a)>v(c_2)$ and set $\ell=\ell_2$;
if $h_1\neq 0$ and $h_2=+\infty$ we choose $a$ with $v(a)<v(c_2)$ and set $\ell=\ell_1$;
if $h_1=0$ and $h_2=+\infty$ we set $a=1$ and $\ell=0$).

From this we deduce that
\begin{equation}\label{eq:4:3}
 h_1\left(|x-\delta|^{(1,\alpha'')}\right) < a|x-p|^{\ell(1,\alpha'')}<h_2\left(|x-\delta|^{(1,\alpha'')}\right)
\end{equation}
for all $x\in I$ with $|x-p| \in v^{-1}((v_1,v_2))$. Since this system of inequalities is definable in $\widehat{\mathcal{M}}^\mathbb R$ we can partition $I$ into intervals on which (\ref{eq:4:3}) holds or  on which it fails. On the intervals on which \eqref{eq:4:3} fails, the image by $v$ of $|x-p|$ has to be $v_1$ or $v_2$, so $v(|x-p|)$ is constant, a case that has been treated already. So we may suppose, after shrinking $I$ again, that the inequalities (\ref{eq:4:3}) hold for $x\in I$.

These inequalities implies that the image of $I$ under $$x\mapsto \left(|x-\delta|^{(1,\alpha'')},a|x-p|^{\ell(1,\alpha'')}\right)$$ is included in the cell $C$, and so in  $S$. So we have
\[
 g(|x-\delta|)=H\left(|x-\delta|^{(1,\alpha'')},a|x-p|^{\ell(1,\alpha'')}\right),
\]
for $x$ in $I$. This means that, replacing  $|x-\delta|$ by $c|x-p|^qu(|x-p|)$, we have 
\[
g(|x-\delta|)=G\left(|x-p|^{\beta'}\right),
\]
with $G$ definable in $\widetilde{\mathcal M}$, and $\beta'=(q,q\alpha_1,\dots,q\alpha_{k-1},\ell(1,\alpha''))$.
Notice that $\ell\in (\Lambda^k)^*$ and $q\in\mathbb Q$, so 
\[
\text{Vect}_{\Lambda}\left(q,q\alpha_1,\dots,q\alpha_{k-1},\ell(1,\alpha'')\right)=\text{Vect}_{\Lambda}\left(1,\alpha''\right).
\]
Applying the inductive hypothesis to $G(|x-p|^{\beta'})$ over $I$ then gives the result.

\end{proof}

\section{Polynomially bounded unary functions in $(\widetilde{\mathbb R},\exp)$}\label{co-def}

In this section we prove Proposition \ref{prop:exp}. For this we fix a polynomially bounded structure $\Rtilde$ that defines the restricted exponential function, and a unary function $f:\R \to \R$ definable in $(\Rtilde, \exp)$.  It is sufficient to prove the next proposition.

\begin{propn}\label{ranandrexpgeneral} With the setting above, if $(\Rtilde, f)$ is polynomially bounded with field of exponents $\Lambda$, then $f$ is definable in $\Rtilde^{\Lambda}$. 
\end{propn}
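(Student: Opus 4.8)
The plan is to descend in two stages. Kuhlmann and Kuhlmann's theorem (recalled above) lets one pass from $(\Rtilde,\exp)$ down to $\Rtilde^{\R}$, and Theorem \ref{main} lets one pass from $\Rtilde^{\R}$ down to $\Rtilde^{\Lambda}$; chaining these gives the proposition. The first stage cannot be applied to $(\Rtilde,f)$ itself, since that structure, although a reduct of $(\Rtilde,\exp)$, need not contain all the real power functions and so need not be an expansion of $\Rtilde^{\R}$. So I would first pass to the enlargement $\widehat{\mathcal N}:=(\Rtilde,f)^{\R}$, that is, the expansion of $\Rtilde$ by $f$ together with all real power functions; note $\widehat{\mathcal N}=(\Rtilde^{\R},f)$.

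For the first stage I would verify the hypotheses of Kuhlmann and Kuhlmann's theorem for $\widehat{\mathcal N}$, with $\Rtilde$ in the role of their polynomially bounded base. The structure $(\Rtilde,f)$ is a reduct of $(\Rtilde,\exp)$ (as $f$ is definable there), hence o-minimal --- recall $(\Rtilde,\exp)$ is o-minimal by the o-minimality of the pfaffian closure of $\Rtilde$, \cite{patrick:pfaffclosure}; it is polynomially bounded by hypothesis; and it defines restricted power functions, since it expands $\Rtilde$, which defines the restricted exponential. By Miller's theorem \cite{mil:fields} the power-enlargement $\widehat{\mathcal N}=(\Rtilde,f)^{\R}$ is therefore polynomially bounded. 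Moreover $\widehat{\mathcal N}$ is o-minimal, being again a reduct of $(\Rtilde,\exp)$ (in which each $x\mapsto x^{r}=\exp(r\log x)$ is definable), it expands $\Rtilde^{\R}$, and it is a reduct of $(\Rtilde,\exp)$. Kuhlmann and Kuhlmann's theorem then gives that $\widehat{\mathcal N}$ and $\Rtilde^{\R}$ have the same Hardy field; since both are o-minimal expansions of the real field with $\Rtilde^{\R}$ a reduct of $\widehat{\mathcal N}$, over $\R$ this is the same as saying they have the same definable unary functions (as recalled in the introduction). As $f$ is a definable unary function of $\widehat{\mathcal N}$, I conclude that $f$ is definable in $\Rtilde^{\R}$.

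For the second stage, $(\Rtilde,f)$ is now a reduct of $\Rtilde^{\R}$. It also expands $\Rtilde^{\Lambda}$: having field of exponents $\Lambda$ and extending $\Rtilde$ (so $k(\Rtilde)\subseteq\Lambda$), it defines $x\mapsto x^{r}$ for every $r\in\Lambda$. Furthermore $\Rtilde^{\R}=(\Rtilde^{\Lambda})^{\R}$, the structure $\Rtilde^{\Lambda}$ is o-minimal (a reduct of $\Rtilde^{\R}$) and defines restricted power functions (it extends $\Rtilde$), and the field of exponents of $(\Rtilde,f)$ equals $\Lambda=k(\Rtilde^{\Lambda})$, the last equality by \cite[Theorem 1.4]{mil:fields}. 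So Theorem \ref{main}, applied with $\Rtilde^{\Lambda}$ in the role of $\Rtilde$ and $(\Rtilde,f)$ in the role of $\widehat{\R}$, yields that $(\Rtilde,f)$ and $\Rtilde^{\Lambda}$ have the same definable sets. In particular $f$ is definable in $\Rtilde^{\Lambda}$, which is the proposition.

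So the argument is an assembly of two external inputs --- Kuhlmann and Kuhlmann's theorem and Theorem \ref{main} --- with Miller's polynomial boundedness theorem acting as a bridge, and the work lies in checking that the hypotheses line up. There is no deep obstacle; the one point that is a genuine fact rather than bookkeeping is the passage, in the first stage, from the ``same Hardy field'' conclusion of Kuhlmann and Kuhlmann to the ``same unary functions'' statement I actually use. This equivalence is valid over $\R$ (and is presumably why the restatement of their result in the present framework can be phrased directly in terms of unary functions), but it should be cited carefully rather than waved through. The remaining ``difficulty'' is simply to keep the roles of the various structures straight.
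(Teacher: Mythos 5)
Your argument is correct and is essentially the paper's own proof: the same auxiliary structure $(\Rtilde,f)^{\R}$, the same use of Miller's theorem to see it is polynomially bounded, the same application of Kuhlmann--Kuhlmann with $\Rtilde^{\R}$ as the base (note their base must have field of exponents $\R$, so despite your phrase ``with $\Rtilde$ in the role of their polynomially bounded base'' it is $\Rtilde^{\R}$ that plays that role --- your stated conclusion already reflects this), followed by the same application of Theorem \ref{main} with $\Rtilde^{\Lambda}$ as base. The only cosmetic difference is that the paper passes from ``same Hardy field'' to definability of $f$ by working germ by germ (at infinity and at each point), whereas you invoke the Hardy-field/unary-function equivalence over $\R$ recalled in the introduction; both routes are fine.
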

To reduce to our main result, we need the following.
\begin{thm}[Kuhlmann and Kuhlmann] Suppose that $\wideparen{\R}$ is a polynomially bounded o-minimal structure which defines the restricted exponential function and has field of exponents $\R$. If $\widehat{\mathbb R}$ is a polynomially bounded reduct of $(\wideparen{\R},\exp)$ expanding $\wideparen{\R}$ then $\wideparen{\R}$ and $\widehat{\mathbb R}$ have the same Hardy field.
\end{thm}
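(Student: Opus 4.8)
The plan is to deduce this from the theorem of Kuhlmann and Kuhlmann \cite{kuh:kuh}, after observing that its hypotheses are met here in a mildly disguised form. They work with a polynomially bounded o-minimal structure $\widetilde{\mathbb R}$ in which the restricted exponential is definable, and show that any polynomially bounded reduct of $(\widetilde{\mathbb R},\exp)$ that expands $\widetilde{\mathbb R}^{\mathbb R}$ has the same Hardy field as $\widetilde{\mathbb R}^{\mathbb R}$. In our situation $\wideparen{\mathbb R}$ is assumed to have field of exponents $\mathbb R$, so it already defines every real power function; hence $\wideparen{\mathbb R}^{\mathbb R}=\wideparen{\mathbb R}$, and the structure $\widehat{\mathbb R}$ of the statement is precisely a polynomially bounded reduct of $(\wideparen{\mathbb R},\exp)$ expanding $\wideparen{\mathbb R}^{\mathbb R}=\wideparen{\mathbb R}$. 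So \cite{kuh:kuh}, applied over the standard model $\mathbb R$, gives that $\widehat{\mathbb R}$ and $\wideparen{\mathbb R}$ have the same Hardy field. The points to check are only bookkeeping: that $(\wideparen{\mathbb R},\exp)$ is o-minimal, so that ``reduct of $(\wideparen{\mathbb R},\exp)$'' is meaningful; that defining the \emph{restricted} exponential is all that is used in \cite{kuh:kuh} (it is); and that specialising their arbitrary model to $\mathbb R$ loses nothing (it does not).

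For the reader's convenience I would also recall the shape of the argument underlying \cite{kuh:kuh}. Writing $H(\cdot)$ for the Hardy field (the germs at $+\infty$ of definable unary functions), one has $H(\wideparen{\mathbb R})\subseteq H(\widehat{\mathbb R})\subseteq H((\wideparen{\mathbb R},\exp))$, and the task is to show that every germ in $H(\widehat{\mathbb R})$ already lies in $H(\wideparen{\mathbb R})$. Such a germ comes from a unary function definable in the o-minimal structure $(\wideparen{\mathbb R},\exp)$, hence near $+\infty$ is given by a term built from $\wideparen{\mathbb R}$-definable functions, the field operations, $\exp$ and $\log$; equivalently, it lies in the exponential--logarithmic Hardy field generated over $H(\wideparen{\mathbb R})$. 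Equip this field with its natural valuation. The value group of $H(\wideparen{\mathbb R})$ is order-isomorphic to $(\mathbb R,+)$ — this is exactly where the polynomial boundedness of $\wideparen{\mathbb R}$ and its having field of exponents $\mathbb R$ enter, through Miller's description of Hardy fields of polynomially bounded structures (\cite{mil:power,mil:fields}) — whereas adjoining $\exp$ produces values, such as $v(e^x)$, incommensurable with $v(x)$. Polynomial boundedness of $\widehat{\mathbb R}$ forces every germ of $H(\widehat{\mathbb R})$ to have value commensurable with $v(x)$, so the value group of $H(\widehat{\mathbb R})$ is again a subgroup of $(\mathbb R,+)$, hence coincides with that of $H(\wideparen{\mathbb R})$.

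The main obstacle, and the technical heart of \cite{kuh:kuh}, is to upgrade this equality of value groups to an equality of Hardy fields, controlling not just the leading asymptotics of a germ but the whole germ. This is carried out by analysing how $\exp$ and $\log$ move a germ within the hierarchy of \emph{levels} (in the sense of Rosenlicht) of the ambient exponential--logarithmic Hardy field, and showing that a germ which is bounded by a power of $x$ and lies over $H(\wideparen{\mathbb R})$ — a field closed under real powers — cannot use the exponential and logarithmic symbols essentially: in any term for such a germ they must ultimately cancel, so that the germ already belongs to $H(\wideparen{\mathbb R})$. I expect all the real work to be in this valuation-theoretic normal-form analysis; the reduction in the first paragraph is purely formal.
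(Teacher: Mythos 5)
Your proposal matches the paper's treatment: the paper likewise proves this statement simply by citing Kuhlmann--Kuhlmann, observing that for $\wideparen{\R}=\R_{\an}^{\R}$ it is a special case of their Theorem 4 and that their proof works in the general setting precisely because $k(\wideparen{\R})=\R$ --- the same point your reduction $\wideparen{\R}^{\R}=\wideparen{\R}$ is making. Your additional sketch of the internal Kuhlmann--Kuhlmann argument is exposition the paper does not attempt, but the formal reduction is the same.
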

In the case that $\wideparen{\R}$ is $\R_{\an}^\R$ this is a special case of the second part of \cite[Theorem 4 on p. 687]{kuh:kuh}. The proof of that Theorem works in the general setting above (for this, the hypothesis $k(\wideparen{\R})=\mathbb R$ is necessary; it is  implicit in the statement of \cite[Corollary 2]{kuh:kuh} as its proof involves \cite[Lemma 15]{kuh:kuh}).

\begin{proof}[Proof of Proposition \ref{ranandrexpgeneral}]
It is enough to prove that the germ of $f$ at each point (including $\pm \infty$) is definable in $\Rtilde^{\Lambda}$. We work at infinity. 
Denote by $\widehat{\mathbb R}$ the expansion $(\Rtilde, f, \{ x^\alpha: \alpha\in \R\})$ of $\widetilde{\mathbb{R}}$ by $f$ and power 
functions, and let $\mathcal{H}$ be the Hardy field of $\widehat{\mathbb R}$. The structure $\widehat{\mathbb R}$ is polynomially 
bounded by \cite[Theorem 1.4]{mil:fields} since the restricted exponential is definable in $(\Rtilde,f)$. Now $\widehat{\mathbb R}$ is 
a reduct of $(\widetilde{\mathbb R}^{\mathbb R},\exp)$ and 
an expansion of $\widetilde{\mathbb R}^{\mathbb R}$. So by the Theorem of Kuhlmann and Kuhlmann above (with $\wideparen{\mathbb R}=\widetilde{\mathbb R}^{\mathbb R}$), 
the field $\mathcal H$ is the Hardy field of $\widetilde{\mathbb R}^{\mathbb R}$. In particular, since the germ at infinity of $f$ 
belongs to $\mathcal H$, the restriction $f|_{(a,\infty)}$ is definable in $\Rtilde^{\mathbb R}$ for some real $a$. 

The structure $(\Rtilde, f|_{(a,\infty)}, \{ x^\alpha : \alpha \in \Lambda\})$ has field of exponents $\Lambda$ as a reduct of $(\widetilde{\mathbb R},f)$. As this structure is a reduct of $\Rtilde^{\mathbb R}$ and an expansion of $\Rtilde^{\Lambda}$ our main result applies, and so $f|_{(a,\infty)}$ is definable in $\Rtilde^{\Lambda}$. 
\end{proof}

\bibliographystyle{alpha}
\bibliography{bibli}

\end{document}